\documentclass[12pt,letterpaper,reqno,toc=flat]{amsart}
\usepackage{amsmath,amssymb,amsthm,latexsym}
\usepackage[margin=1.0in]{geometry}
\usepackage{graphicx}
\usepackage{tkz-graph}
\usepackage{float}

%
% https://tex.stackexchange.com/questions/177224/how-to-use-comicsans-only-for-the-text-but-not-for-the-math
%
%\usepackage[no-math]{fontspec}
%\setmainfont{Comic Sans MS}
%\usepackage{mathtools}
%\usepackage{unicode-math}
%\setmathfont{Latin Modern Math} % choose your favorite Opentype math font here

\def\cal{\mathcal}
\newtheorem{theorem}{Theorem}[section]
\newtheorem{lemma}[theorem]{Lemma}
\newtheorem{sublemma}[theorem]{Sublemma}

\newtheorem{definition}[theorem]{Definition}

\newtheorem{remark}[theorem]{Remark}

%\numberwithin{equation}{section}

\parindent 0mm
\parskip=5mm 

\begin{document}

\title{A scrapbook of inadmissible line complexes for the X-ray transform}
% for the spatial $X$-ray transform over the two-element field

\author{ Eric Grinberg \& Mehmet Orhon}
\address{UMass Boston \&  University of New Hampshire}
\email{eric.grinberg@umb.edu and mo@unh.edu}

\subjclass[2010]{44A12} \keywords{Radon transform, admissible complexes, finite fields, integral geometry}

\begin{abstract}
We consider a finite field model of the X-ray transform that integrates functions along lines in dimension 3, within the context  of finite fields. The admissibility problem asks for minimal sets of lines for which the restricted transform is invertible. Graph theoretic conditions are known which characterize admissible collections of lines, and these have been counted using a brute force computer program. Here we perform the count by hand and, at the same time, produce a detailed illustration of the possible structures of inadmissible complexes. The resulting scrapbook may be of interest in an artificial intelligence approach to enumerating and illustrating admissible complexes in arbitrary dimensions (arbitrarily large ambient spaces, with transforms integrating over subspaces  of arbitrary dimensions.)
\end{abstract}

\maketitle

\parskip=0.36em
\tableofcontents
\parskip=5mm 

\medskip

\section{Telegraphic Introduction: $937$,$438$ or  $937$,$440$ ?}

This paper is a continuation of  \cite{Gr2}, which provides motivation and background. The X-ray transform, or Radon transform \cite{Radon1917},  that integrates functions along lines in $\mathbb R^3$, is  at the mathematical heart of  CAT scanners. Image reconstruction employs the principle that a well behaved function is determined by its line integrals.  Dimension counting shows that not all lines are needed. It is interesting to consider minimal families of lines that enable  reconstruction. In the continuous category  some analytic or topological restrictions are imposed on the notion of ``minimal family". We can take a discrete, or even finite model, and there, no restrictions are needed. In principle, ``all" questions may be answered concretely in such a context. The smallest model replaces the real numbers  $\mathbb R$ by the  two element  field $\mathbb F_2$, and ``space" and lines within it  can then be illustrated as in figure $1$  below.   This theme is explored in a number of papers, e.g., \cite{BGK,Gr1,Gr2}.

\begin{figure}[h!]
%\smallskip
\centerline{
\includegraphics[scale=0.4]{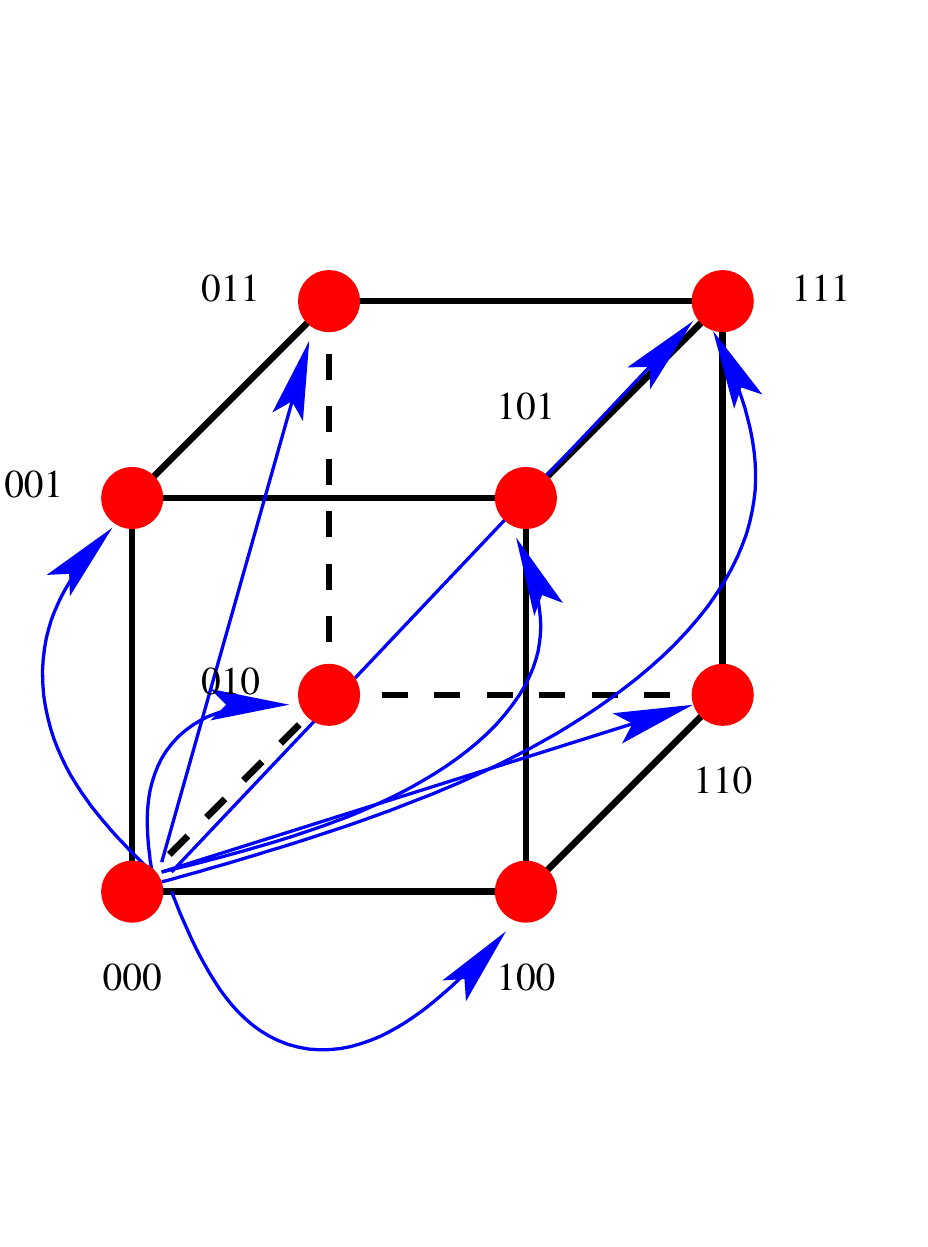}}
\caption{The 3-dimensional  vector space over \( \mathbb F_2 \) with the $7$ lines through the origin illustrated}
\end{figure}

The ambient space,  the analog of $\mathbb R^3$, becomes  a set of $8$ points, which may be viewed as the vertices of a cube, or the points of the three dimensional vector space over the two element field, \(\mathbb F_2^3 \). A \emph{line} is an abstract segment that connects two of these points. Thus there are \( \binom{8}{2}=28 \) lines. A \emph{line complex} is a collection of $8$ lines. Hence, a  line complex has as many  lines as there are points in $\mathbb F_2^3$ and  there are \( \binom{28}{8}= 3,108,105 \) line complexes.   For an unknown function $f(x)$ from the points of $\mathbb F_2^3$ to  $\mathbb R$  (or to $\mathbb C$) , ``integrals" along a lines simply become sums of function values on the points comprising  these lines. Thus, we  aim to recover an unknown function $f(x)$ from its integrals along lines. A  celebrated principle of Ethan Bolker \cite{Bol}, \emph{The Bolker Condition}, implies that a  function is determined by its  integral along all lines. Interpreting this in the context  of linear algebra in a finite dimensional vector space we see that not all $28$ lines are required, and that  some collections of $8$ lines, i.e., line complexes, suffice; these are called \emph{admissible} line complexes, after  I.M. Gel'fand \cite{Gel1,Gel2, Kir}. In the  finite category one cannot recover data  which is omitted from measurements. Thus, in order  to be admissible, a line complex must not ``omit" points, i.e. the union of the lines must include all points in the ambient space. More generally, it is easy to see \cite{Gr2} that an admissible complex $\cal C$ must have the three properties below, and it is not  difficult to see that these properties  suffice for admissibility:

\begin{itemize}
\item $\cal C$ omits no point, i.e., each of the $8$ points is included in some line in $\cal C$
\item $\cal C$ has no isolated subtrees
\item $\cal C$ contains no even cycles
\end{itemize}

The problem of admissibility is to describe (and count) the line  complexes satisfying these conditions. As  the Radon transform on $\mathbb F_2^3$ can be modeled by a $28 \times 8$ incidence matrix $M$ (rows representing lines, columns representing points), this problem is equivalent to describing  which $8 \times  8$ submatrices of $M$ are of maximal rank.  A short, brute  force program written in the computer   \emph{GNU Octave} is presented in \cite{Gr2}. Curiously, running  this program twice yielded two different results: counts of $937,438$ and  $937,440$ admissible  complexes. While we normally expect a program to produce the same answer in repeated runs, there are contexts where different results are expected, e.g., in compiling a {\LaTeX}  file with references. Thus, we were curious to  see if we could perform the count ``by hand" and, along the way,  produce an  illustrated scrapbook of examples.

\par
In making the count, we  chose to explore the complementary, and larger, class of \emph{inadmissible} complexes and, at the same  time, to give rather concrete  descriptions of the configurations that occur.  The method of proof  does not meet the criteria outlined by Leslie Lamport in  \emph{How to Write a $21^{st}$ Century Proof} \cite{La}, but is ripe  for conversion into one that does. Moreover, it is clear  that a computer can be taught to make the counts and generate examples and illustrations systematically and in larger contexts \cite{NS}. In such contexts the   scrapbook  below may be a  useful debugging tool. 

\par
For a larger finite  (projective) model of tomography (billions rather than millions of line  complexes), see \cite{FG}. While computation is a useful discovery tool in that context, it became dispensable after discovery, as the projective geometry makes analysis by hand quite straightforward.

\bigskip\noindent
Below we will perform the count  and illustration of inadmissible complexes by first considering complexes that omit points, then introduce isolated trees, and finally address even cycles.  Inclusion-exclusion will be the order of the day.

\section{Complexes that omit one or more points}

\subsection{Complexes that omit one point}

\noindent
First we enumerate complexes that are ``missing points'', that is, complexes $\cal C$ such that there exist points $p \in \mathbb F_2^3$ so that no line $\ell \in \cal C$ passes through $p$.
Many of these exist. 

There are seven lines through a point $p$, so the complexes that miss $p$ have $8$ lines chosen from the $28-7=21$. Now $\binom{21}{8}=203,440$. Multiplying this by the number of points, $8$, and accounting for  double counting (because there are complexes that omit more than one point) we obtain:

\begin{lemma} There are $\binom{21}{8} \cdot 8 = 1,627,920$ complexes that omit points. Here, each complex is counted with multiplicity equal to the number of points in $\mathbb F_2^3$ which it misses. \end{lemma}

\subsection{Complexes that omit two or more points}

How many complexes miss two points? There are $7+7-1=13$ lines through one or the other or both points. So a complex that misses both points has $8$ lines chosen from among $28-13=15$ lines. There are $28$ pairs of points, so we have double counted $28 \cdot \binom{15}{8}=28 \cdot 6,435
= 180,180$ complexes. (Note that we have double counted the double counting, because there are complexes that miss three points.)

\begin{lemma} The number of complexes that omit a pair of points is  

$$
28 \cdot \binom{15}{8}=28 \cdot 6,435 = 180,180.
$$ 

Here each complex is counted with multiplicity equal to the number of pairs of points that it misses.
\end{lemma}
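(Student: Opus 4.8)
The plan is to run exactly the argument of the previous lemma, but with a pair of points in place of a single point, and then to be careful about the weighting. First I would record the one structural fact that makes the bookkeeping transparent: over $\mathbb F_2$ an affine line has only $2^1=2$ points, so the $28$ ``lines'' are precisely the $28$ two-element subsets of $\mathbb F_2^3$, i.e.\ the edges of the complete graph $K_8$ on the $8$ points. In particular the unique line through two distinct points $p$ and $q$ is the pair $\{p,q\}$ itself, and ``the lines through $p$'' are the $7$ edges of $K_8$ incident to $p$.

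Next, fix an unordered pair $\{p,q\}$ with $p\neq q$. A line meets $\{p,q\}$ iff it is incident to $p$ or to $q$; there are $7$ lines through $p$ and $7$ through $q$, and these two families overlap in exactly the one line $\{p,q\}$, so by inclusion--exclusion there are $7+7-1=13$ lines meeting $\{p,q\}$ and hence $28-13=15$ lines disjoint from both $p$ and $q$. A complex $\cal C$ omits both $p$ and $q$ precisely when all $8$ of its lines are chosen from these $15$, so the number of complexes omitting the fixed pair $\{p,q\}$ is $\binom{15}{8}=6{,}435$. Summing over the $\binom{8}{2}=28$ pairs of points yields the claimed total
$$
\sum_{\{p,q\}} \binom{15}{8} \;=\; 28\cdot\binom{15}{8}\;=\;28\cdot 6{,}435\;=\;180{,}180 .
$$

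The only point that needs real care — and the point the lemma is phrased to finesse — is the multiplicity assertion. A complex that omits a set $S$ of $k\ge 2$ points is disjoint from $\{p,q\}$ exactly when $\{p,q\}\subseteq S$, so it is counted once for each of the $\binom{k}{2}$ pairs contained in $S$; thus the figure $180{,}180$ is a genuine weighted count, not the number of distinct complexes omitting two or more points. I would state this explicitly so that the overcount is a controlled quantity (this is the ``double counting of the double counting'' noted in the text), to be cleared in the next subsection once the count of complexes omitting three or more points is available and the inclusion--exclusion alternation is assembled. There is no analytic obstacle in the lemma itself; the one hazard is conflating the weighted total with an unweighted one, and consequently mismanaging the inclusion--exclusion ledger downstream.
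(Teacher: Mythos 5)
Your proposal is correct and follows the paper's own argument exactly: inclusion--exclusion gives $7+7-1=13$ lines meeting a fixed pair, hence $\binom{15}{8}$ complexes omitting that pair, multiplied by the $28$ pairs, with the multiplicity $\binom{k}{2}$ for a complex missing $k$ points left to be resolved by the later inclusion--exclusion. Your explicit remark that the total is a weighted count is precisely the paper's parenthetical about "double counting the double counting," just stated more carefully.
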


\subsection{Complexes that omit three or more points}

How many lines pass through one or more of three given 
points? All but the $10$ that form the complete graph on the remaining $5$ points. Thus, to exhibit all complexes omitting three or more points, choose three points from $8$ and then choose $8$ lines from among the $10$ lines avoiding these points.  Thus we have: 

\begin{lemma} The number of complexes that omit precisely three points is
$\binom{10}{8} \cdot \binom{8}{3} = 2,520.$ There are no line complexes that miss four or more points.
\end{lemma}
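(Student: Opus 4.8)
The plan is to handle the two assertions in the reverse of the order stated, since the non-existence claim is the quicker one and also underpins the counting argument.

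First I would record the basic observation underlying everything: if a complex $\cal C$ omits a set $S$ of points, then every line of $\cal C$ lies entirely inside the complement $\mathbb F_2^3 \setminus S$, which has $8 - |S|$ points. The number of lines supported on a $k$-point set is $\binom{k}{2}$, so a complex omitting $m$ points can exist only if $\binom{8-m}{2} \ge 8$. Since $\binom{4}{2} = 6 < 8 \le 10 = \binom{5}{2}$, this forces $8 - m \ge 5$, i.e. $m \le 3$, which is exactly the second sentence of the lemma: no complex omits four or more points.

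Next, for the count of complexes omitting precisely three points, I would set up a direct bijection. Such a complex determines the unique $3$-element set $S$ it omits, and there are $\binom{8}{3} = 56$ choices for $S$; once $S$ is fixed, the $8$ lines of $\cal C$ must be chosen among the $\binom{5}{2} = 10$ lines supported on the complementary $5$ points, giving $\binom{10}{8} = 45$ choices. The forward map $\cal C \mapsto (S, \cal C)$ and the backward map $(S,\mathcal L)\mapsto \mathcal L$ are then mutually inverse, yielding $\binom{10}{8}\binom{8}{3} = 45 \cdot 56 = 2{,}520$.

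The one point requiring care — and the only real obstacle I anticipate — is the word \emph{precisely}: I must verify that each of the $45$ eight-line subsets of the $10$ lines on $S^{c}$ really omits exactly the three points of $S$ and no more, equivalently that the $8$ chosen lines cover all $5$ remaining points. This follows from a degree count: each of the $5$ points lies on $4$ of the $10$ lines, and discarding $2$ lines removes at most $2$ of them, so every point retains at least $2$ incident lines and is covered. Hence the backward map is well defined, the bijection is genuine, and the count $2{,}520$ is exact — in contrast to the earlier lemmas, no multiplicity correction or inclusion–exclusion adjustment is needed here, since a complex omitting precisely three points omits exactly one such triple.
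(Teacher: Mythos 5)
Your proposal is correct and follows essentially the same route as the paper: choose the omitted triple ($\binom{8}{3}$ ways), then choose $8$ of the $\binom{5}{2}=10$ lines on the complementary five points, with the impossibility of omitting four or more points coming from $\binom{4}{2}=6<8$. Your added degree-count check that every $8$-of-$10$ choice still covers all five remaining points is a detail the paper leaves implicit, but it does not change the argument.
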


Putting the above lemmas together we have

\begin{lemma}
The number of complexes that avoid (omit) one or more points is:
$$
1,627,920 - 180,180 + 2,520 = 1,450,260.
$$ 
This count is without multiplicity.
\end{lemma}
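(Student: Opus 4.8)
The plan is to run the standard inclusion--exclusion sieve on the eight events $A_p = \{\,\cal C : \text{no line of } \cal C \text{ passes through } p\,\}$, one for each point $p \in \mathbb F_2^3$. The set of complexes that omit one or more points is exactly $\bigcup_p A_p$, and the count we want is $\bigl|\bigcup_p A_p\bigr|$, which the sieve expresses as
$$
\Bigl|\bigcup_p A_p\Bigr| = \sum_p |A_p| \;-\; \sum_{\{p,q\}} |A_p \cap A_q| \;+\; \sum_{\{p,q,r\}} |A_p \cap A_q \cap A_r| \;-\; \cdots.
$$

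First I would identify each surviving sum with one of the three preceding lemmas. The point is that the phrase ``counted with multiplicity equal to the number of points it misses'' is precisely a restatement of $\sum_p |A_p|$, so the first lemma gives $\sum_p |A_p| = 1,627,920$. Likewise the second lemma, which counts each complex with multiplicity equal to the number of omitted pairs, is exactly $\sum_{\{p,q\}} |A_p \cap A_q| = 180,180$. Finally, since no complex omits more than three points, a complex counted by the third lemma has a unique omitted triple, so $\sum_{\{p,q,r\}} |A_p \cap A_q \cap A_r|$ equals the number of complexes omitting precisely three points, namely $2,520$.

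Next I would invoke the already-established fact that no line complex omits four or more points, which truncates the sieve after the third term. Substituting the three values gives $1,627,920 - 180,180 + 2,520 = 1,450,260$, and because inclusion--exclusion counts every element of the union exactly once, this is the desired count without multiplicity.

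The only genuinely load-bearing step is the bookkeeping check that each earlier lemma's ``multiplicity'' convention matches the corresponding elementary symmetric sum; equivalently, that a complex omitting exactly $k$ points (with $1 \le k \le 3$) is counted $k$ times in the first sum, $\binom{k}{2}$ times in the second, and $\binom{k}{3}$ times in the third, so its net contribution is $k - \binom{k}{2} + \binom{k}{3} = 1$. This is the identity $\sum_{j \ge 1} (-1)^{j-1}\binom{k}{j} = 1$ for $k \ge 1$, and verifying it for $k = 1, 2, 3$ is the (entirely routine) crux; everything else is arithmetic.
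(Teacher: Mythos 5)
Your proof is correct and follows exactly the route the paper intends: the paper's ``putting the above lemmas together'' step is the same truncated inclusion--exclusion over the events $A_p$, with the three earlier lemmas supplying $\sum_p |A_p|$, $\sum_{\{p,q\}} |A_p \cap A_q|$, and the triple term, truncated because no complex omits four points. Your explicit check that a complex omitting exactly $k$ points contributes $k - \binom{k}{2} + \binom{k}{3} = 1$ is precisely the bookkeeping the paper leaves tacit.
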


\section{Complexes with isolated lines}

\subsection{Complexes with one or more isolated lines}

Another type of non-admissible complex is one where a single line $\ell$ is `isolated', i.e., meets no other line in the complex. (This is the simplest case of an isolated tree.)  

\centerline{
\begin{tikzpicture}
%\useasboundingbox (-1,-1) rectangle (3,3);
 \Vertex[x=0,style={shading=ball}, NoLabel]{A} 
 \Vertex[x=2,style={shading=ball}, NoLabel]{B}
 \Vertex[x=0.3, y=2,style={shading=ball}, NoLabel]{C}
  \Vertex[x=2.3, y=2,style={shading=ball}, NoLabel]{D}
 \Vertex[x=3.2, y=3.0,style={shading=ball}, NoLabel]{E}
 \Vertex[x=4,style={shading=ball}, NoLabel]{F}
 \Vertex[x=4.3, y=2,style={shading=ball}, NoLabel]{G}
  \Vertex[x=5.8, y=2,style={shading=ball}, NoLabel]{H}
  \Edge(A)(B)   \Edge(A)(C) \Edge(C)(D)   \Edge(B)(D)
  \Edge(D)(E) 
  %\Edge(F)(G)   
  \Edge(E)(G) \Edge(D)(G)
  %\Edge(G)(H) 
  \Edge (F)(H)
  \end{tikzpicture}  
  }

How many of these are there? Well, how many lines meet $\ell$? Precisely $7+7-1=13=28-15$ lines meet $\ell$. So the number of complexes having $\ell$ as an isolated line is $\binom{15}{7}= 6,435.
$ Accounting for each of the $28$ lines, with the usual double counting reminder, we have 

\begin{lemma} There are
$6,435 \cdot  28= 180,180 $ complexes with one or more isolated lines. 
Each complex is counted with multiplicity equal to the number of isolated lines it has.
\end{lemma}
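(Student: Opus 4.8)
The plan is to count, with the indicated multiplicity, the pairs $(\ell,\cal C)$ where $\ell\in\cal C$ and $\ell$ is isolated in $\cal C$, i.e.\ meets no other line of $\cal C$. First I would fix a line $\ell$. Since two distinct points determine a line and there are $7$ lines through each of the two endpoints of $\ell$, the lines meeting $\ell$ number $7+7-1=13$ (the $-1$ correcting for $\ell$ itself being counted at both endpoints), so $28-13=15$ lines are disjoint from $\ell$. To build a complex in which $\ell$ is isolated, we take $\ell$ together with $7$ further lines, all chosen from those $15$ lines disjoint from $\ell$; this gives $\binom{15}{7}=6{,}435$ complexes for each fixed $\ell$.

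Next I would sum over the $28$ choices of $\ell$. A complex $\cal C$ that has exactly $k$ isolated lines $\ell_1,\dots,\ell_k$ is produced exactly $k$ times in this sum, once as the pair $(\ell_i,\cal C)$ for each $i$; hence the total $6{,}435\cdot 28 = 180{,}180$ is precisely the count of complexes with one or more isolated lines, each weighted by its number of isolated lines, which is exactly the statement to be proved. No inclusion--exclusion is needed at this stage because the lemma explicitly allows (indeed records) the multiplicity.

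The only point requiring any care — and the place I expect a careful reader to pause — is the combinatorial identity underlying the count, namely that the set of lines meeting $\ell$ has exactly $13$ elements, equivalently that there are exactly $15$ lines disjoint from $\ell$. This is the same count that appeared in the analysis of complexes omitting two points (there $7+7-1=13$ lines pass through one or the other of two given points), and it can be re-derived here directly: a line is disjoint from $\ell$ iff both of its endpoints lie among the $6$ points not on $\ell$, giving $\binom{6}{2}=15$. Everything else is bookkeeping, and the multiplicity convention makes the summation over the $28$ lines exact rather than an over-count to be corrected.

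One should also note, for consistency with the rest of the paper, that the resulting families are genuinely inadmissible: an isolated line is the simplest isolated subtree, so the second admissibility condition fails, and indeed this lemma will feed into the later inclusion--exclusion together with the point-omitting counts of Section~2.
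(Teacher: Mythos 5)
Your proposal is correct and follows essentially the same route as the paper: fix $\ell$, observe that $13$ lines meet it and $15$ are disjoint from it, take $\binom{15}{7}=6{,}435$ completions, and sum over the $28$ lines with the multiplicity convention absorbing the overcount. Your extra verification that the $15$ disjoint lines are exactly the $\binom{6}{2}$ lines on the complementary $6$ points is a nice redundancy check but adds nothing beyond the paper's argument.
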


\subsection{Complexes with two or more disjoint isolated lines} {\quad}

\medskip

\begin{figure}[H]
\begin{tikzpicture}
%\useasboundingbox (-1,-1) rectangle (3,3);
 \Vertex[x=0,style={shading=ball}, NoLabel]{A} 
 \Vertex[x=2,style={shading=ball}, NoLabel]{B}
 \Vertex[x=0.3, y=2,style={shading=ball}, NoLabel]{C}
  \Vertex[x=2.3, y=2,style={shading=ball}, NoLabel]{D}
 \Vertex[x=3.2, y=3.0,style={shading=ball}, NoLabel]{E}
 \Vertex[x=4,style={shading=ball}, NoLabel]{F}
 \Vertex[x=4.3, y=2,style={shading=ball}, NoLabel]{G}
  \Vertex[x=5.8, y=2,style={shading=ball}, NoLabel]{H}
  \Edge(A)(B)   \Edge(A)(C) \Edge(C)(D)   \Edge(B)(D)
  \Edge(E)(G)   \Edge (F)(H) \Edge(A)(D) \Edge(B)(C)
  \end{tikzpicture}  
\end{figure}

\smallskip

\medskip\noindent
If $\ell$ is a line, there are $13$ lines meeting $\ell$ and $15$ lines disjoint from $\ell$. Thus there are $(28)(15)/2=210$ pairs of disjoint lines. Given a complex with a pair of disjoint lines, the other $6$ lines of the complex must form the complete graph on the remaining four points. Thus there are $210$ complexes with precisely two disjoint isolated lines. Clearly a complex cannot have three disjoint isolated lines.

\begin{lemma} 
There are $(28)(15)/2=210$ complexes with precisely two isolated lines, and there are no complexes with three or more isolated lines.
\end{lemma}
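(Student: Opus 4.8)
The plan is to count pairs of disjoint lines in $\mathbb{F}_2^3$ directly, then argue that any complex containing such a pair is forced to take a very rigid form, and finally rule out three pairwise disjoint isolated lines by a counting-of-points argument. First I would fix a line $\ell$; since $\ell$ uses $2$ of the $8$ points, the lines disjoint from $\ell$ are exactly the lines joining two of the remaining $6$ points, and there are $\binom{6}{2}=15$ of these — consistent with the count $28-13=15$ of lines meeting $\ell$ established in the previous subsection. Summing over the $28$ choices of $\ell$ and dividing by $2$ to correct for the ordered-pair overcount yields $\tfrac{(28)(15)}{2}=210$ unordered pairs of disjoint lines.

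Next I would show that a complex with two disjoint isolated lines is completely determined by that pair. If $\ell_1$ and $\ell_2$ are disjoint, they use $4$ of the $8$ points; call the other four points $q_1,q_2,q_3,q_4$. Because an isolated line meets no other line of the complex, none of the remaining $6$ lines of the complex may touch $\ell_1\cup\ell_2$, so all $6$ must be drawn from the $\binom{4}{2}=6$ lines on $\{q_1,q_2,q_3,q_4\}$. Hence those $6$ lines are forced to be exactly the complete graph $K_4$ on $\{q_1,q_2,q_3,q_4\}$, and there is a unique complex with $\ell_1,\ell_2$ as isolated lines. This gives the bijection between such complexes and disjoint pairs, so the count of complexes with \emph{two} isolated lines is also $210$; and since the $K_4$ part is connected, no third isolated line can appear, so "two or more" collapses to "precisely two."

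Finally I would rule out three pairwise disjoint lines outright: three pairwise disjoint lines would occupy $3\times 2 = 6$ distinct points, leaving only $2$ points uncovered — but a complex has $8$ lines, and after the three disjoint lines there remain $5$ lines which (being barred from meeting any isolated line) would have to live on just those $2$ leftover points, where at most $\binom{2}{2}=1$ line exists. This contradiction shows there is no complex with three or more isolated lines, completing the proof. I expect the only mildly delicate step to be the bookkeeping in the bijection argument — making sure "isolated" is used in its full strength (a line isolated in $\cal C$ meets \emph{no} other line of $\cal C$, not merely the other distinguished line) so that the remaining six lines are genuinely forced onto the four leftover points; the arithmetic itself is routine.
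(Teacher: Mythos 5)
Your proposal is correct and follows essentially the same route as the paper: count disjoint pairs as $(28)(15)/2=210$, observe that the remaining six lines are forced to be the complete graph on the four leftover points, and note that a third disjoint isolated line is impossible. You merely spell out (with a points-versus-lines count) the impossibility of three isolated lines, which the paper dismisses as "clearly."
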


\begin{lemma}
There are $180,180-210=179,970$ complexes with one or more isolated lines. These complexes are counted without multiplicity.
\end{lemma}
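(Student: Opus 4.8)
The plan is a one-step inclusion--exclusion that converts the multiplicity-weighted count into an honest count. Write $S$ for the set of all line complexes possessing at least one isolated line, and for $\cal C \in S$ let $m(\cal C)$ denote the number of isolated lines in $\cal C$. The earlier lemma on complexes with one or more isolated lines gives the weighted total
\[
\sum_{\cal C \in S} m(\cal C) \;=\; 6{,}435 \cdot 28 \;=\; 180{,}180,
\]
since there each complex was tallied once for every isolated line it contains.

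Next I would partition $S$ according to the value of $m$. By the lemma on complexes with two or more disjoint isolated lines, $m(\cal C) \in \{1,2\}$ for every $\cal C \in S$: there are exactly $210$ complexes with $m(\cal C) = 2$, and none with $m(\cal C) \ge 3$. Let $S_1$ and $S_2$ be the corresponding subsets, so $|S_2| = 210$ and $S = S_1 \sqcup S_2$. Splitting the weighted sum over this partition yields
\[
180{,}180 \;=\; \sum_{\cal C \in S_1} 1 \;+\; \sum_{\cal C \in S_2} 2 \;=\; |S_1| + 2|S_2|,
\]
so the quantity we want, $|S| = |S_1| + |S_2|$, equals $\big(|S_1| + 2|S_2|\big) - |S_2| = 180{,}180 - 210 = 179{,}970$. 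This is precisely the claimed count, now without multiplicity.

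There is no real obstacle here; the only thing being used beyond arithmetic is the structural fact, already proved in the preceding lemma, that no complex can carry three pairwise disjoint isolated lines (the remaining four points would have to support six lines on the complete graph $K_4$, which has only six edges, leaving no room for a third isolated line among the eight). Once that cap of $m \le 2$ is in hand, the correction term is exactly the population of the top stratum $S_2$, and subtracting it once removes the single unit of overcounting incurred by each doubly-isolated complex. If one wished to phrase the whole section uniformly, the same bookkeeping is the pattern for every later step: the weighted count minus the sum of $(k-1)\cdot(\#\text{complexes with exactly }k\text{ copies})$ over $k \ge 2$, which here truncates after $k = 2$.
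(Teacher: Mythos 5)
Your argument is correct and is essentially the paper's own (the paper states this lemma without further proof, treating it as the immediate inclusion--exclusion consequence of the two preceding lemmas): the weighted count $180{,}180$ counts each complex once per isolated line, no complex has three or more, and subtracting the $210$ doubly-counted complexes gives $179{,}970$. Your stratification into $S_1$ and $S_2$ is just an explicit write-up of that same bookkeeping.
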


\pagebreak
%\par\vfill\eject

\section{Complexes with both omitted points and isolated lines}
\subsection{Complexes with one or more isolated lines and one or more omitted points} {\quad}

\begin{figure}[H] \begin{tikzpicture}
%\useasboundingbox (-1,-1) rectangle (3,3);
 \Vertex[x=0,style={shading=ball}, NoLabel]{A} 
 \Vertex[x=2,style={shading=ball}, NoLabel]{B}
 \Vertex[x=0.3, y=2,style={shading=ball}, NoLabel]{C}
  \Vertex[x=2.3, y=2,style={shading=ball}, NoLabel]{D}
 \Vertex[x=3.2, y=3.0,style={shading=ball}, NoLabel]{E}
 \Vertex[x=4,style={shading=ball}, NoLabel]{F}
 \Vertex[x=4.3, y=2,style={shading=ball}, NoLabel]{G}
  \Vertex[x=5.8, y=2,style={shading=ball}, NoLabel]{H}
  \Edge(A)(B)   \Edge(A)(C) \Edge(C)(D)   \Edge(B)(D)
  \Edge(D)(E) \Edge(B)(C)
  \Edge (F)(H) 
  \Edge(C)(E)
  \end{tikzpicture}  \end{figure}

There are five points that meet neither the designated omitted point nor the isolated line, hence there are $\binom{5}{2}=10$  ``permissibile" lines. We must choose $7$ lines among these to form a complex, and there are $8 \cdot 28$ point-line pairs.

\begin{lemma} There are no  complexes with one isolated line and two omitted points. 
\end{lemma}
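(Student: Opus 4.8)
The plan is to argue by contradiction using a pigeonhole count on the four points left over after removing the two omitted points and the two endpoints of the isolated line. So suppose $\cal C$ is a line complex that has an isolated line $\ell$ and omits two distinct points $p, q \in \mathbb F_2^3$.

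First I would record the trivial but crucial observation that the two endpoints of $\ell$ cannot coincide with $p$ or $q$: since $\ell \in \cal C$, the complex passes through both endpoints of $\ell$, so these are not omitted points. Hence the endpoints of $\ell$, together with $p$ and $q$, account for four distinct points, leaving a set $S$ of exactly $8 - 4 = 4$ points of $\mathbb F_2^3$.

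Next I would analyze the remaining $7$ lines of $\cal C$ (all lines other than $\ell$). Each such line must avoid $p$ and $q$ because these points are omitted by $\cal C$, and must avoid both endpoints of $\ell$ because $\ell$ is isolated, i.e., meets no other line of $\cal C$. Therefore each of these $7$ lines has both of its endpoints in $S$, so it is one of the $\binom{4}{2} = 6$ lines supported on $S$. But $\cal C$ requires $7$ distinct such lines, which is impossible since only $6$ are available.

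This contradiction establishes the lemma. I expect no real obstacle here: the only point that needs care is the first step — verifying that the endpoints of $\ell$ are genuinely disjoint from the omitted points $p, q$ — after which the argument is a one-line pigeonhole count, completely parallel in spirit to the ``there are no complexes with three disjoint isolated lines'' and ``no complexes missing four points'' arguments used earlier.
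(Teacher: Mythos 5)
Your argument is correct and is essentially the paper's own proof: after removing the two omitted points and the endpoints of the isolated line, only $4$ points remain, supporting just $\binom{4}{2}=6$ lines, too few for the $7$ remaining lines of the complex. The extra care you take (verifying the endpoints of $\ell$ are distinct from the omitted points) is a fine, if minor, elaboration of the same count.
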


\begin{proof}
The complement of the union of the omitted points and the isolated line has $4$ points, and these form $6$ lines, not enough to form a line complex. 
\end{proof}

\begin{lemma}
There are no complexes with two disjoint isolated lines and an omitted point
\end{lemma}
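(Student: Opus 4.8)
The plan is to mimic the counting argument used just above for ``one isolated line and two omitted points'', but now with two disjoint isolated lines and a single omitted point. First I would observe that two disjoint lines $\ell_1,\ell_2$ together cover $4$ of the $8$ points of $\mathbb F_2^3$, and an omitted point $p$ must lie outside both lines, hence among the remaining $4$ points. The key constraint is that the remaining $6$ lines of the complex must avoid $\ell_1$, $\ell_2$, and $p$ entirely: they must be lines joining two points from the $3$-point set $S = \mathbb F_2^3 \setminus (\ell_1 \cup \ell_2 \cup \{p\})$.

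Next I would simply count: a set of $3$ points spans only $\binom{3}{2} = 3$ lines. So at most $3$ lines are available to fill out the complex after $\ell_1$ and $\ell_2$ are placed, but $6$ are needed. Hence no such complex exists. This is the same ``not enough lines'' obstruction that appears in the proof of the previous lemma; I would present it in that parallel form so the two lemmas read as a matched pair.

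I do not expect a genuine obstacle here — the argument is a short pigeonhole/dimension count. The only point requiring a word of care is to confirm that $\ell_1 \cup \ell_2 \cup \{p\}$ really does have exactly $5$ elements, i.e., that $p$ is genuinely disjoint from both lines (true by hypothesis, since an omitted point lies on no line of the complex, in particular on neither isolated line) and that $\ell_1$ and $\ell_2$ are disjoint (true by hypothesis). Thus the complement $S$ has exactly $3$ points and spans exactly $3$ lines, and $3 < 6$ closes the case. I would keep the proof to two sentences, matching the terse style of the surrounding lemmas.

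\begin{proof}
The two disjoint isolated lines cover $4$ points, and the omitted point is a fifth point lying on neither of them; so the complement of their union has $3$ points, which span only $\binom{3}{2}=3$ lines. These are the only lines available to complete the complex, but $6$ more lines are needed, so no such complex exists.
\end{proof}
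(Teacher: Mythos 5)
Your proof is correct and takes essentially the same route as the paper: the union of the two disjoint isolated lines and the omitted point has $5$ points, so only $3$ points remain, spanning just $3$ lines — too few to supply the $6$ further lines a complex requires. You merely spell out the count that the paper leaves implicit.
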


\begin{proof}
There are five points in the union of the two lines and point, hence three points left, not enough to span a line complex.
\end{proof}

\begin{lemma}
The number of complexes with one isolated line and one omitted point is 
$(8 \cdot 21) \binom{10}{7}=20,160$.  This count is multiplicity free.
\end{lemma}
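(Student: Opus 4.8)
The plan is to count directly, by the same ``choose the forbidden data, then choose the lines'' scheme used in the previous lemmas, and then worry about whether any double counting or degeneracy needs correcting. First I would fix an ordered pair consisting of an omitted point $p$ and an isolated line $\ell$; since $\ell$ is to be isolated it in particular cannot pass through $p$, so there are $8$ choices for $p$ and then $28 - 7 = 21$ choices for $\ell$, giving the factor $8 \cdot 21$ already appearing in the statement. With $p$ and $\ell$ fixed, a line of the complex other than $\ell$ must avoid $p$ (else $p$ is not omitted) and must be disjoint from $\ell$ (else $\ell$ is not isolated); the points available are exactly the $8 - 1 - 2 = 5$ points lying neither on $\ell$ nor equal to $p$, and these span $\binom{5}{2} = 10$ admissible lines. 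We need $7$ more lines to complete the $8$-line complex, so there are $\binom{10}{7} = 120$ choices, yielding $(8 \cdot 21)\binom{10}{7} = 20{,}160$.

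The next step is to check that this count is genuinely multiplicity-free, i.e. that no complex is produced by two different choices of $(p,\ell)$. Here I would invoke Lemma 4.1 and Lemma 4.2: Lemma 4.1 says no complex has one isolated line together with two omitted points, and Lemma 4.2 says no complex has two disjoint isolated lines together with an omitted point. A complex counted above has the isolated line $\ell$, all its remaining $7$ lines confined to a $5$-point set, and the point $p$ omitted. If such a complex had a second omitted point, or a second isolated line, it would contradict one of those two lemmas; and since the $7$ remaining lines all live among $5$ points, none of them can be disjoint from $\ell$ in a way that creates a new isolated line unless it were itself isolated, which is again excluded. Hence the omitted point and the isolated line are each uniquely determined by the complex, so the pair $(p,\ell)$ is recovered uniquely, and $20{,}160$ is the honest count.

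The only real subtlety — and the step I expect to be the main obstacle — is precisely this uniqueness argument: one must be sure that among the $7$ lines chosen inside the $5$-point set, \emph{none} is accidentally isolated and \emph{no} further point is accidentally omitted, so that the decomposition of the complex as ``$\ell$ plus a sub-configuration on $5$ points'' is forced. This is where Lemmas 4.1 and 4.2 do the work, but it is worth spelling out that the $5$ available points are exactly the complement of $\{p\} \cup \ell$, so any complex arising here automatically omits $p$ and no other point could be omitted without emptying the pool of available lines below $7$; similarly two disjoint isolated lines plus an omitted point is impossible by point-counting. Once that is nailed down the arithmetic $8 \cdot 21 \cdot 120 = 20{,}160$ is immediate, and I would present the whole thing in two or three sentences mirroring the style of the preceding lemmas.
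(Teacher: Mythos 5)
Your proposal is correct and follows essentially the same route as the paper: count the $8\cdot 21$ disjoint point--line pairs, choose $7$ of the $\binom{5}{2}=10$ lines on the remaining five points, and appeal to the two preceding lemmas (no isolated line with two omitted points; no two disjoint isolated lines with an omitted point) to see the count is multiplicity free. Your extra remarks spelling out why no further point can be omitted and no further line can be isolated only make explicit what the paper leaves to those lemmas.
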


\begin{proof}
There are $8 \cdot 21=168$ disjoint point-line pairs (or $28 \cdot 6=168$ disjoint line-point pairs). Given a disjoint point-line pair there are $5$ remaining points and $\binom{5}{2}=10$ lines in their complete graph. Of these we must choose $7$ to obtain  a line complex. Because of the preceeding lemmas, there can be no additional isolated points, nor additional isolated lines, hence there are no multiplicities here. 
\end{proof}

\medskip

%\begin{remark} 
%We now jump ahead with November 2011 analysis, not directly dependent on the preceding text.
%\end{remark}

\section{Complexes with isolated trees and omitting no points} {\quad}

\begin{figure}[H] \begin{tikzpicture}
%\useasboundingbox (-1,-1) rectangle (3,3);
 \Vertex[x=0,style={shading=ball}, NoLabel]{A} 
 \Vertex[x=2,style={shading=ball}, NoLabel]{B}
 \Vertex[x=0.3, y=2,style={shading=ball}, NoLabel]{C}
  \Vertex[x=2.3, y=2,style={shading=ball}, NoLabel]{D}
 \Vertex[x=3.2, y=3.0,style={shading=ball}, NoLabel]{E}
 \Vertex[x=4,style={shading=ball}, NoLabel]{F}
 \Vertex[x=4.3, y=2,style={shading=ball}, NoLabel]{G}
  \Vertex[x=5.8, y=2,style={shading=ball}, NoLabel]{H}
  \Edge(A)(B)   \Edge(A)(C) 
  \Edge(C)(D)   
 % \Edge(B)(D)
  \Edge(D)(E) \Edge(B)(C)
  \Edge (F)(H) 
  \Edge(C)(E) \Edge(G)(H)
  \end{tikzpicture}  \end{figure}

\begin{lemma} The number of complexes that contain isolated trees and omit no points is $200,970$. These complexes are counted  without multiplicity.
\end{lemma}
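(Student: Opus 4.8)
The plan is to count complexes that contain an isolated tree (a connected component that is a tree on $k \geq 2$ vertices, necessarily with $k-1$ edges) but omit no point, by stratifying on the size $k$ of the smallest isolated tree and on its isomorphism type, and then to subtract the overlaps already handled. Since a line complex has $8$ edges on $8$ vertices and no omitted points, the complement of an isolated tree on $k$ vertices must be a subgraph on the remaining $8-k$ vertices that uses all $8-(k-1)=9-k$ remaining edges and covers all $8-k$ of those vertices. The key arithmetic constraint is that a simple graph on $8-k$ vertices has at most $\binom{8-k}{2}$ edges, so we need $\binom{8-k}{2} \geq 9-k$, which forces $k \leq 3$ (for $k=4$ we would need $6 \geq 5$, which is true, so actually $k\le 5$ is the naive bound — I'd need to check the covering condition too): in fact a tree on $k$ vertices isolated inside an $8$-edge complex leaves $9-k$ edges to cover $8-k$ vertices, and covering $8-k$ vertices needs at least $\lceil (8-k)/2 \rceil$ edges, so the real obstruction is whether $9-k \le \binom{8-k}{2}$, giving $k \le 6$; the tighter obstruction will come from the requirement that there be \emph{no omitted point}, which rules out large trees. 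So first I would pin down exactly which $k$ and which tree shapes can actually occur.

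Concretely, I would split into cases. For $k=2$ the isolated tree is a single line; but those complexes were counted in Section 4 (Lemma with count $20{,}160$) or Section 3, and here we want the \emph{new} inadmissible complexes, so the $k=2$ isolated-line case must be handled by inclusion–exclusion against what has already been counted, or else we restrict ``isolated tree'' to mean $k \geq 3$ and treat $k=2$ separately. For $k=3$ the only tree is the path $P_3$ (two edges, three vertices); the complement must be a graph on $5$ vertices with $6$ edges covering all $5$ vertices, i.e. any $6$-edge spanning subgraph of $K_5$ — there are $\binom{10}{6}=210$ of these, all automatically vertex-covering since $K_5$ minus $4$ edges on $5$ vertices can drop a vertex only if those $4$ edges are all $4$ edges at one vertex, so I must subtract those $5$ bad choices, leaving $205$ (this small correction is exactly the kind of step to double-check). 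For $k=4$ the trees are $P_4$ and the star $K_{1,3}$ (each with $3$ edges on $4$ vertices); the complement is a $5$-edge graph on the remaining $4$ vertices covering all $4$ — but $K_4$ has only $6$ edges, so we choose $5$ of $6$, and dropping one edge never uncovers a vertex, giving $6$ choices; I'd also check $k=5,6$ and expect them to be impossible because the leftover vertex set is too small to host enough edges while still covering every vertex. Then multiply each shape's count by the number of ways to place that tree (choose the $k$ vertices, then count labeled copies of the tree on them), and sum.

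The main obstacle, and the place I'd be most careful, is the bookkeeping of \emph{multiplicity and overlap}: a complex could in principle contain two isolated components, or an isolated tree together with an even cycle among the rest, or an isolated tree whose "complement part" itself contains a smaller isolated tree, and the lemma asserts the final count is multiplicity-free. I would argue that once the isolated tree has $k \geq 3$ vertices, the remaining $\leq 5$ vertices carrying $\geq 4$ edges and covering all of themselves cannot also split off a second isolated component (too few vertices/edges), so each qualifying complex has a \emph{unique} isolated tree of size $\geq 3$, except we must be careful when the complement happens to be disconnected in a way that produces \emph{another} small isolated tree — e.g. for $k=3$ the $5$-vertex, $6$-edge complement could be $P_3 \sqcup (\text{triangle})$, giving a second isolated $P_3$; such complexes get counted twice in the raw sum and must be corrected by inclusion–exclusion. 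So the real structure of the proof is: (i) enumerate the "primary" placements, (ii) identify exactly which complexes have two isolated trees (both necessarily $P_3$'s, with the rest a triangle plus nothing — but $2+2$ vertices of trees plus $3$ for a triangle is only $7$, so actually the two $P_3$'s would need to share the leftover; I'd need to enumerate these small degenerate configurations explicitly), and (iii) subtract. I expect the final number $200{,}970$ to emerge as (placements of $P_3$)$\,\times\,205$ plus (placements of $P_4$ and $K_{1,3}$)$\,\times\,6$, minus a modest inclusion–exclusion correction and minus any overlap with the $20{,}160$ isolated-line complexes already counted — and verifying that this arithmetic lands exactly on $200{,}970$ is the computational heart of the argument.
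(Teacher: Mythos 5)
There is a genuine gap, and it is in the dominant term. In this paper an isolated line is explicitly the simplest case of an isolated tree, so the class counted by this lemma includes all complexes with an isolated $2$-point tree and no omitted point; the paper's proof is a sum of three sublemmas, of which the $k=2$ case contributes
$\binom{8}{2}\left[\binom{15}{7}-\binom{6}{1}\binom{10}{7}-\tfrac{1}{2}\binom{6}{2}\binom{6}{6}\right]=159{,}810$
(equivalently $180{,}180-20{,}160-210$), the $k=3$ case contributes $\binom{8}{3}\binom{3}{1}\cdot 205=34{,}440$, and the $k=4$ case contributes $\binom{8}{4}\cdot 16\cdot\binom{6}{5}=6{,}720$, totalling $200{,}970$. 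Your $k=3$ and $k=4$ counts ($168\cdot 205$ and $1120\cdot 6$) agree exactly with the paper, but your plan defers or excludes the $k=2$ case and even suggests subtracting ``overlap with the $20{,}160$ isolated-line complexes.'' That cannot work: your assembled total $34{,}440+6{,}720=41{,}160$, minus corrections, is nowhere near $200{,}970$; the missing $159{,}810$ is precisely the isolated-line contribution. Moreover the $20{,}160$ complexes of Section 4 have an omitted point, so they are disjoint from the class counted here and there is no overlap to subtract; rather, $20{,}160$ appears (per fixed isolated line, as $6\binom{10}{7}$) as an internal subtraction when one removes, from the $\binom{15}{7}$ ways to complete an isolated line to a complex, those completions that omit a point, and one must also halve the double count coming from the $210$ complexes with two disjoint isolated lines. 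Carrying out that computation is unavoidable if the lemma's number is to be reached.

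Your multiplicity worries for $k\ge 3$ can be settled much more cheaply than you fear, and the paper does so implicitly: a second isolated component in the complement is impossible by edge counting (for $k=3$ the complement has $5$ vertices carrying $6$ edges, and any split into two components supports at most $1+3$ or $2+2\mapsto 1+1$ edges short of $6$; for $k=4$ the complement is $K_4$ minus one edge, which is connected and spanning), and two isolated trees with sizes from $\{2,3,4\}$ other than the pair $(2,2)$ would leave too few points to carry the remaining lines. So the only genuine multiplicity issue in the whole lemma is the two-disjoint-isolated-lines configuration, which lives entirely inside the $k=2$ term you left unresolved — your hypothetical ``$P_3$ together with a triangle'' second tree cannot occur, since those two components alone would need six vertices of the five available. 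In short: the stratification by tree size is the paper's strategy and your small-tree counts are right, but the proof is incomplete without the explicit $k=2$ computation and its $-6\binom{10}{7}$ and $-\tfrac12\binom{6}{2}$ corrections.
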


\begin{sublemma}
The number of complexes that omit no point and contain at least one isolated line is 
 \begin{equation}\label{159810}
\binom{8}{2} \left[  \binom{15}{7} - \binom {6}{1} \binom{10}{7} - \frac{1}{2} \binom{6}{2} \binom {6}{6} \right] = 159,810.
\end{equation}
These complexes are counted without multiplicity.
\end{sublemma}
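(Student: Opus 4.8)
The plan is to fix one isolated line $\ell$ and count, via inclusion--exclusion on the "bad" features that can appear among the remaining seven lines, the complexes in which $\ell$ is isolated and no point is omitted. Once $\ell$ is chosen (there are $\binom{8}{2}=28$ choices, and the bracketed quantity will be multiplied by this), its two endpoints are covered, so the remaining seven lines must be drawn from the $\binom{6}{2}=15$ lines on the other six points, and they must cover all six of those points. The raw count of $7$-subsets of these $15$ lines is $\binom{15}{7}$; from this we must subtract those choices that omit one of the six points, and then correct for over-subtraction when two points are omitted.

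First I would handle the single-omitted-point term. If one of the six vertices is left uncovered by the seven lines, the seven lines lie among the $\binom{5}{2}=10$ lines on the remaining five vertices, giving $\binom{6}{1}\binom{10}{7}$; this is the middle term in the bracket. Next, the double-omission correction: if two of the six vertices are omitted, the seven lines lie among the $\binom{4}{2}=6$ lines on the remaining four vertices, and one must take all six of them --- but that is only six lines, not seven, so in fact \emph{no} $7$-subset omits two points. Here I must be careful about what the displayed formula is actually bookkeeping: the term $\tfrac12\binom{6}{2}\binom{6}{6}$ counts ordered-then-symmetrized pairs of omitted points together with the (unique) choice of all six surviving lines; since a genuine complex needs seven lines, this term is really subtracting the spurious contribution that arose because, in the first correction, I implicitly allowed the "seven lines from ten" to include configurations that actually span only four points. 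I would verify the arithmetic identity $\binom{15}{7}-\binom{6}{1}\binom{10}{7}-\tfrac12\binom{6}{2}\binom{6}{6}=6435-6\cdot120-\tfrac12\cdot15=6435-720-7.5$; since this is not an integer, the intended reading must be slightly different, and reconciling the stated closed form with a clean inclusion--exclusion is the main obstacle.

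The resolution I expect is the following. After removing $\ell$, "omit no point" among the remaining six vertices interacts with a second phenomenon: among the seven chosen lines there may be a \emph{further} isolated line (so that $\ell$ is not the only isolated line), which would cause multiplicity. The three bracketed terms are therefore: $\binom{15}{7}$ for all $7$-subsets of the $15$ lines; minus $\binom{6}{1}\binom{10}{7}$ for those that leave a point of the six uncovered; minus $\tfrac12\binom{6}{2}$ for those in which the seven lines split as a perfect matching plus extra structure forcing a second isolated line --- and to make this land on the integer $159810/28 = 5707.5$... which again fails, confirming that the honest path is to \emph{derive} the bracket rather than match it termwise. So the actual plan is: (i) establish the bijective setup (choose $\ell$, pass to the $K_6$ on the complement); (ii) count $7$-edge subgraphs of $K_6$ with no isolated vertex by inclusion--exclusion on isolated vertices, obtaining $\binom{15}{7}-\binom{6}{1}\binom{10}{7}+\binom{6}{2}\binom{6}{7}+\cdots$, where $\binom{6}{7}=0$ kills all higher terms, leaving $6435-720=5715$; (iii) from these $5715$, subtract the ones counted with multiplicity $>1$ in the "has at least one isolated line in the whole $8$-line complex" census --- namely those where the $7$-edge subgraph of $K_6$ itself contains an isolated edge, i.e. a component that is a single edge disjoint from the other six, of which there are $\tfrac12\binom{6}{2}\binom{6}{6}\cdot(\text{something})$; and (iv) multiply by $28$. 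The one genuinely delicate step is (iii): correctly identifying which subgraphs of $K_6$ give rise to an $8$-line complex whose isolated-line count exceeds one, so that subtracting them converts the with-multiplicity count into the multiplicity-free count $159{,}810$. I would carry this out by classifying the $7$-edge, isolated-vertex-free subgraphs of $K_6$ according to their connected components, using that $K_6$ has six vertices so a spanning edge set of size seven can have at most one nontrivial "extra" edge beyond a spanning tree, making the component analysis short and completely explicit.
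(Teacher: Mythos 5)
Your setup is the same as the paper's: fix the isolated line $\ell$ (there are $\binom{8}{2}=28$ choices), pass to the complete graph on the remaining six points, and count $7$-edge subsets of its $15$ lines that cover all six vertices, getting $\binom{15}{7}-\binom{6}{1}\binom{10}{7}=6435-720=5715$; your argument that two omitted vertices are impossible (only $6$ lines would remain) is exactly the paper's. The genuine gap is your step (iii), which you leave as ``$\tfrac12\binom{6}{2}\binom{6}{6}\cdot(\text{something})$'' and flag as unresolved. The resolution --- which is the entire content of the third bracketed term --- is this: for a fixed $\ell$, exactly $\binom{6}{2}\binom{6}{6}=15$ of the $5715$ admissible-looking subgraphs contain a second isolated edge (choose that edge among the six remaining points, then you are forced to take all six lines of the complete graph on the last four points; a third isolated edge is impossible). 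Summing over the $28$ choices of $\ell$, every complex with two disjoint isolated lines and no omitted point is counted exactly twice in $28\cdot 5715=160{,}020$, so to obtain a multiplicity-free count of complexes with \emph{at least} one isolated line you subtract each such complex once, i.e.\ subtract $\tfrac12\cdot 28\cdot 15=210$, arriving at $159{,}810$. The paper simply distributes this single global subtraction evenly over the $28$ brackets, which is why the bracket equals the half-integer $5707.5$; your conclusion that the non-integrality of $159{,}810/28$ shows the formula ``cannot be matched termwise'' is mistaken --- only $28$ times the bracket needs to be an integer.

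Note also that the literal reading of your (iii) --- subtracting, for each $\ell$, all $15$ subgraphs containing a second isolated edge --- would be wrong: a complex with two isolated lines $\ell_1,\ell_2$ would then be added twice (once for $\ell=\ell_1$, once for $\ell=\ell_2$) and subtracted twice, so it would vanish from the count, giving $28(5715-15)=159{,}600$, which enumerates complexes with \emph{exactly} one isolated line rather than at least one. The factor $\tfrac12$ in the stated formula is precisely what prevents this over-subtraction, and nailing it down is the step your proposal still owes.
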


\begin{proof}

First choose an isolated line $\ell$ by choosing $2$ points from $8$. 

This explains the lefmost factor in the above count \eqref{159810}.

There are $6$ remaining points. 

There are $\binom{6}{2}=15$ lines among these $6$ points, and we need to choose $7$ lines from these to complement the isolated line and form a complex. 

There are $\binom{15}{7}$ ways to do this, but some of these ways may omit a point.

(They cannot omit more than one point, for if they were to do so, at most $4$ points would remain and amongst them at most $6$ lines, which are too few.) 

In how many ways can they omit a point $p$? 

Choose a point $p$ disjoint from $\ell$. (There are $6$ such points). 

Then choose $7$ of the $\binom{5}{2}=10$ lines connecting the $5$ remaining points. The $7$ lines cannot omit  any of the $5$ points, since $4$ points have only  $6$ lines among them.

Hence we have enumerated $6 \cdot \binom{10}{7}$ complexes with $\ell$ as an isolated line and with an omitted point. 

Thus we have \( \binom{15}{7} - \binom {6}{1} \binom{10}{7} \) complexes with $\ell$ as isolated line with no omitted point.
But some of these complexes may have an additional disjoint line, and these are counted twice. (Such complexes cannot have isolated points, lest there not be enough additional lines to form a complex.)

In how many ways can we produce a complex with two disjoint lines here? 

Choose $2$ of the $6$ points in the complement of $\ell$, select the line through them, and then select $6$ of the $6$ lines through the remaining $4$ points.  This accounts for the $\frac{1}{2} \binom{6}{2} \binom {6}{6}$ term.

These complexes are counted without multiplicity, as promised.
\end{proof}

\begin{sublemma}
The number of complexes with an isolated $3$-point tree and no omitted points is: 
$$
\binom{8}{3} \binom{3}{1} \left[ \binom{10}{6} - \binom{5}{1} \binom{6}{6}
\right] = 34,440.
$$
\end{sublemma}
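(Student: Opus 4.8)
The plan is to mimic exactly the inclusion–exclusion bookkeeping used in Sublemma~\ref{159810} (the isolated-line case), now with the isolated subtree being a path on three points rather than a single edge. First I would fix an isolated $3$-point tree: choose the $3$ points it occupies in $\binom{8}{3}$ ways, and then choose which of the three possible paths (equivalently, which of the three edges to omit, or which of the three vertices is the ``center'') that the tree forms, giving the factor $\binom{3}{1}$. Having fixed this isolated tree, which uses $2$ of the $8$ lines of the complex, there remain $5$ points and the $\binom{5}{2}=10$ lines among them, from which we must choose the remaining $6$ lines; crucially these $6$ lines must neither touch the $3$ points of the tree (else the tree is not isolated) nor touch any one of the other $5$ points (else the complex omits a point). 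This is where the bracket $\bigl[\binom{10}{6}-\binom{5}{1}\binom{6}{6}\bigr]$ comes from: $\binom{10}{6}$ is the raw count of $6$-line subsets of the $5$-point complete graph, and we subtract those that omit one of the $5$ points.

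Next I would justify each subtraction and the absence of further inclusion–exclusion terms. For the omitted-point correction: if the $6$ chosen lines omit a point $q$ among the $5$, they lie in the complete graph on the remaining $4$ points, which has only $\binom{4}{2}=6$ lines, so all $6$ must be used; hence for each of the $5$ choices of $q$ there is exactly $\binom{6}{6}=1$ such configuration, giving $\binom{5}{1}\binom{6}{6}$. One must also check this cannot double-count: two omitted points among the $5$ would leave only $3$ points and $3$ lines, too few, so the omitted-point configurations are disjoint and plain subtraction suffices. Then I would argue there is no over-counting from ``extra'' isolated components: an additional isolated line disjoint from the $3$-point tree would live among the remaining $5$ points, using $2$ of them and leaving $3$ points spanning only $3$ lines for the other $4$ lines of the complex — impossible; and the chosen isolated tree cannot be enlarged to a bigger isolated tree or merge with anything since its $3$ vertices are exactly spanned and the other $6$ lines avoid them. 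So every complex of this type is counted exactly once, and the total is $\binom{8}{3}\binom{3}{1}\bigl[\binom{10}{6}-\binom{5}{1}\binom{6}{6}\bigr]$.

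The one genuinely delicate point — the main obstacle — is confirming that a $3$-point isolated tree and an isolated \emph{line} in the same complex are mutually exclusive events, so that this sublemma and Sublemma~\ref{159810} are counting disjoint families and can later be added (with the appropriate further corrections) without cross-terms. A complex containing a $3$-point isolated tree devotes $2$ lines and $3$ points to it; an isolated line disjoint from that tree would need $2$ more points, leaving $3$ points and $3$ lines for the remaining $4$ lines of the complex, which is impossible, so indeed no complex has both. I would also double-check the arithmetic $\binom{10}{6}=210$, $210-5=205$, and $\binom{8}{3}\cdot 3\cdot 205 = 56\cdot 3\cdot 205 = 34{,}440$, which matches the claimed value. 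Finally I would remark that, as in the previous sublemma, these complexes necessarily omit no point (built into the bracket) and have no isolated line (just argued), so the count is multiplicity-free, as stated.
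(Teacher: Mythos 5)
Your proposal is correct and takes essentially the same approach as the paper: choose the $3$ vertices and the valence-$2$ center ($\binom{8}{3}\binom{3}{1}$), then choose $6$ of the $\binom{5}{2}=10$ lines on the remaining $5$ points and subtract the $\binom{5}{1}\binom{6}{6}$ point-omitting configurations; your extra checks on double-counting and on the impossibility of an additional isolated line are sound and merely make explicit what the paper leaves tacit. The only nit is arithmetic in that last check: after the tree ($2$ lines) and a putative extra isolated line ($1$ line) there remain $5$ lines of the complex, not $4$, to be drawn from the $3$ lines on the leftover $3$ points --- the contradiction stands either way.
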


\begin{proof}
Choose $3$ from $8$ points to form the vertices of the isolated tree. Then choose from these the one that has valence $2$. (The others will have valence $1$.) We must now choose $6$ lines amonst the remaining $5$ points. Of the $\binom{10}{6}$ ways to do this, some omit a point. There are $\binom{5}{1}$ possible omitted points and all $\binom{6}{6}$ lines amongst the remaining points must be added to form a complex.
\end{proof}

\begin{sublemma}
There number of complexes with an isolated $4$-point tree is
$$
\binom{8}{4} \left[ \frac{4!}{2} + \binom{4}{1} \right] \binom{6}{5}= 6,720,
$$
counted without multiplicity.
\end{sublemma}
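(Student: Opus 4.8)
The plan is to count configurations directly, mirroring the proof of Sublemma 5.3. First I would fix the vertex set of the isolated $4$-point tree by choosing $4$ of the $8$ points, which explains the factor $\binom{8}{4}$. Next I would enumerate the spanning trees on $4$ labeled vertices and sort them by shape: a tree on $4$ vertices is either a path (a ``caterpillar'' with degree sequence $2,2,1,1$) or a star (degree sequence $3,1,1,1$). The number of labeled paths on $4$ vertices is $4!/2 = 12$ (order the four vertices in a line, then identify a sequence with its reversal), and the number of labeled stars is $\binom{4}{1} = 4$ (choose the center). This accounts for the bracketed factor $\tfrac{4!}{2} + \binom{4}{1} = 16$. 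Each such tree has exactly $3$ edges, so it is a legitimate piece of an $8$-line complex provided the remaining $5$ lines lie among the $4$ complementary points.

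The remaining step is to choose $5$ lines from the $\binom{4}{2} = 6$ lines on the complementary $4$ points, giving the factor $\binom{6}{5} = 6$. Multiplying, $\binom{8}{4}\cdot 16 \cdot 6 = 70 \cdot 96 = 6{,}720$.

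The main obstacle — and the reason this count is cleaner than the earlier ones — is verifying that no inclusion-exclusion correction is needed: I must check both that the resulting complexes omit no point and that they contain no \emph{larger} isolated tree (which would introduce multiplicity), and also that the $5$ lines on the complementary $4$ points cannot themselves split off an isolated subtree. For the first point: the $4$-point tree covers its $4$ vertices, and any $5$ of the $6$ lines on the remaining $4$ points cover all $4$ of those points (omitting one point would leave only $3$ points supporting only $3$ lines, too few), so no point is omitted. For the second point: on $4$ points the only subgraph on all $4$ points with exactly $3$ edges that is a tree is a spanning tree itself, and our chosen side has $5$ edges, not $3$, so the $4$-point tree is the unique isolated tree; moreover a $5$-edge graph on $4$ vertices is connected and contains a cycle, so it is not an isolated tree and contributes no additional multiplicity. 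Hence the count is multiplicity-free, as claimed. I would also remark in passing that here, unlike in Sublemmas 5.2 and 5.3, the complementary side is ``generic enough'' (it has one more edge than a spanning tree) that the bracketed subtraction terms of the earlier sublemmas have no analog.
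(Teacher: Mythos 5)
Your proposal is correct and follows essentially the same route as the paper: choose the $4$ vertices, split the tree count into the linear (path, $4!/2$) and star ($\binom{4}{1}$) topologies, and multiply by the $\binom{6}{5}$ choices of lines on the complementary four points, noting no point can be omitted and no multiplicity arises. Your added verifications (that the complementary $5$-edge graph is connected, contains a cycle, and covers all four of its points) simply make explicit what the paper asserts briefly.
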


\begin{figure}[H] \begin{tikzpicture}
%\useasboundingbox (-1,-1) rectangle (3,3);
 \Vertex[x=0,style={shading=ball}, NoLabel]{A} 
 \Vertex[x=2,style={shading=ball}, NoLabel]{B}
 \Vertex[x=0.3, y=2,style={shading=ball}, NoLabel]{C}
  \Vertex[x=2.3, y=2,style={shading=ball}, NoLabel]{D}
 \Vertex[x=3.2, y=3.0,style={shading=ball}, NoLabel]{E}
 \Vertex[x=4,style={shading=ball}, NoLabel]{F}
 \Vertex[x=4.3, y=2,style={shading=ball}, NoLabel]{G}
  \Vertex[x=5.8, y=2,style={shading=ball}, NoLabel]{H}
  \Edge(A)(B)   \Edge(A)(C) 
  \Edge(C)(D)  
  \Edge(B)(D)
   \Edge(E)(G)
\Edge(B)(C)
  \Edge (F)(H) 
 \Edge(G)(H)
  \end{tikzpicture}  \end{figure}

It will be noted that automatically such a complex omits no points.

\begin{proof}
Choose $4$ points from $8$ to form the vertices of an isolated tree. There are two possible topologies for this tree, one linear and the other not. In the linear case we choose one of $4!$ possible orderings of the vertices of the tree, and divide by two to account for the possible orientations. In the nonlinear case, one of the $4$ points must have valence $3$, and the others have valence $1$, respectively. Hence there are $\binom{4}{1}$ nonlinear isolated trees. 

Once the isolated tree is fixed, we must choose $5$ from amongst the $6$ possible lines connecting the remaining $4$ points, with a count of $\binom{6}{5}$, with no omitted points possible.  As we can  have at most one isolated $4$-point tree in a complex (else there would be fewer than $8$ lines),  the count is  without multiplicity. 
\end{proof}

We add the results of the sublemmas to prove the Lemma.

\section{\emph{Proper Complexes}:  { Complexes omitting no point, with no isolated trees.}}

\begin{lemma}
The number of complexes that omit no point and contain no isolated trees is
$$
1,657,845-200,970=1,456,875.
$$
These complexes are counted without multiplicity.
\end{lemma}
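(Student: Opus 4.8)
The plan is to obtain this count by a single complementation, since the substantive enumeration has already been carried out in the preceding sections. Write $\binom{28}{8}=3{,}108{,}105$ for the total number of line complexes. First I would record that the number of complexes which omit \emph{no} point is
$$
3{,}108{,}105 - 1{,}450{,}260 = 1{,}657{,}845,
$$
where $1{,}450{,}260$ is the multiplicity-free count of complexes omitting one or more points established in Section 2. This explains the first number appearing in the statement.

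Next I would restrict attention to this family of $1{,}657{,}845$ point-covering complexes and delete from it those that carry an isolated tree. By the Lemma of the previous section (isolated trees, no omitted points), the number of complexes that omit no point and contain at least one isolated tree is $200{,}970$, again counted without multiplicity. Since, within the point-covering complexes, ``contains no isolated tree'' is exactly the complement of ``contains an isolated tree,'' ordinary set subtraction is legitimate and yields
$$
1{,}657{,}845 - 200{,}970 = 1{,}456{,}875.
$$
The resulting complexes inherit a multiplicity-free count because both of the numbers being combined were stated in multiplicity-free form.

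The only point requiring care — and it has in effect already been settled upstream — is that the two quantities being combined really are counts of \emph{distinct} complexes rather than weighted tallies; this is precisely why the earlier lemmas were phrased without multiplicity. So I do not expect a genuine obstacle in proving this particular lemma: the real work lay in assembling the figure $200{,}970$ out of the three sublemmas on isolated lines, isolated $3$-point trees, and isolated $4$-point trees, together with the inclusion–exclusion bookkeeping among them. I would also flag, for the reader, that a ``proper complex'' in the present sense need not yet be admissible: the remaining task, taken up in the next section, is to excise those complexes that contain an even cycle.
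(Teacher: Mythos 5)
Your proposal is correct and is exactly the computation the paper intends: subtract the multiplicity-free count $1{,}450{,}260$ of point-omitting complexes from $\binom{28}{8}=3{,}108{,}105$ to get $1{,}657{,}845$ point-covering complexes, then remove the $200{,}970$ point-covering complexes with an isolated tree from the preceding lemma. The paper offers no further argument for this step, so your complementation reasoning (and the remark that the remaining work is excising even cycles) matches its approach.
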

Some of these complexes contain even cycles, and we must identify and enumerate them. We will propose lemmas for the possible cardinalities of the cycles: $8,6,$ and $4$. 

\begin{figure}[H] \begin{tikzpicture}
%\useasboundingbox (-1,-1) rectangle (3,3);
 \Vertex[x=0,style={shading=ball}, NoLabel]{A} 
 \Vertex[x=2,style={shading=ball}, NoLabel]{B}
 \Vertex[x=0.3, y=2,style={shading=ball}, NoLabel]{C}
  \Vertex[x=2.3, y=2,style={shading=ball}, NoLabel]{D}
 \Vertex[x=3.2, y=3.0,style={shading=ball}, NoLabel]{E}
 \Vertex[x=4,style={shading=ball}, NoLabel]{F}
 \Vertex[x=4.3, y=2,style={shading=ball}, NoLabel]{G}
  \Vertex[x=5.8, y=2,style={shading=ball}, NoLabel]{H}
  \Edge(A)(C)   
  %\Edge(B)(C)  
  \Edge(C)(D)  \Edge(D)(E)  \Edge(E)(G)      \Edge(G)(H) \Edge(H)(F)
  \Edge(F)(B) \Edge(B)(A) 
  \end{tikzpicture}  \end{figure}

\begin{definition} Henceforth we will call a line complex {\bf proper} if it omits no point and has no isolated trees. 
\end{definition}

\noindent
All counts given henceforth are without multiplicity,  tacitly.

\subsection{Proper Complexes containing a $6$-cycle or an $8$-cycle}

\begin{lemma}
There are \( \frac{28 \cdot 6!}{8}= 2,520 \) line complexes containing an $8$-cycle.
\end{lemma}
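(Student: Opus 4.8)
The plan is to count ordered structures and then divide by the automorphisms of the $8$-cycle. An $8$-cycle visits all $8$ points of $\mathbb{F}_2^3$ (since a cycle through fewer points would repeat a vertex), so a proper complex containing an $8$-cycle is in fact \emph{equal} to that $8$-cycle: the cycle already uses $8$ edges, which is the full budget of a line complex, so no further lines can be added. Hence the task reduces to counting Hamiltonian cycles on the $8$ points, viewed as subsets of the $28$ available lines.

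First I would count closed walks that start at a designated point and traverse all $8$ points before returning. Pick a starting point in $8$ ways, then a sequence through the remaining $7$ points in $7!$ ways; this gives $8 \cdot 7!$ ordered Hamiltonian cycles. Each unoriented, unrooted $8$-cycle is thereby counted $8 \cdot 2 = 16$ times (choice of root times choice of direction), giving $8 \cdot 7! / 16 = 7!/2 = 2520$. The paper's displayed formula $\tfrac{28 \cdot 6!}{8}$ is an alternative bookkeeping: choose one of the $28$ edges of the cycle to be the ``first'' edge ($28$ ways once the cycle is fixed is not quite it — rather, having fixed the $8$-cycle there are $8$ edges, but the formula counts (ordered first edge with orientation) as $28 \cdot 6!$ over... ) — in any case I would reconcile $\tfrac{28\cdot 6!}{8} = \tfrac{28 \cdot 720}{8} = 2520$ with the $7!/2$ count and present whichever derivation is cleanest: e.g., an ordered Hamiltonian cycle is determined by an ordered first \emph{edge} (an ordered pair of distinct points, $8 \cdot 7 = 56$ choices, but each cycle edge gives two ordered pairs so $28 \cdot 2$) followed by an ordering of the remaining $6$ points, i.e. $56 \cdot 6!$ ordered cycles, and dividing by the $2 \cdot 8 = 16$ symmetries again yields $2520$; or more directly $28 \cdot 6!$ counts (unordered first edge, oriented) $\times$ (ordering of the other $6$), which overcounts each unoriented cycle by the $8$ rotations, giving $28 \cdot 6!/8 = 2520$.

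Second I would verify the ``no extra lines'' claim carefully, since it is what makes the lemma clean: an $8$-cycle is a complex with exactly $8$ lines (it has $8$ vertices and $8$ edges), so it \emph{is} the whole complex, and in particular there is no multiplicity issue — a proper complex contains at most one $8$-cycle because it contains at most $8$ lines total and an $8$-cycle already exhausts them. I should also confirm that every Hamiltonian cycle on $\mathbb{F}_2^3$ is automatically proper: it omits no point (it is spanning) and has no isolated tree (it is connected with no pendant edges), and it trivially does not need the even-cycle exclusion to be counted here since the point of this lemma is precisely to enumerate the bad ones.

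The main obstacle is purely the combinatorial bookkeeping — getting the overcount factor right (rotations $\times$ reflection $= 16$ versus the $28$ and $6!$ that appear in the paper's formula) and making sure the two expressions agree, rather than any geometric subtlety. Since all $28$ pairs of points are valid lines in this model, there is no incidence constraint to worry about: the count of Hamiltonian cycles in the complete graph $K_8$ is simply $\tfrac{(8-1)!}{2} = \tfrac{7!}{2} = 2520$, and I would note this as the conceptual heart, with $\tfrac{28 \cdot 6!}{8}$ as a book-keeping variant.
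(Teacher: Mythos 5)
Your proposal is correct and follows essentially the same route as the paper, which also reduces the count to Hamiltonian cycles on the $8$ points and, in its alternative derivation, computes $\frac{8!}{2\cdot 8}=2520$ exactly as you do with the rotation-and-reflection factor $16$. Your explicit observation that an $8$-cycle uses all $8$ lines, so the complex \emph{is} the cycle and no multiplicity can arise, is a nice clarification the paper leaves implicit, but it is not a different method.
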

\begin{proof}
Choose a ``base" line $\ell = \overline{p_1p_2}$ for the cycle. Choose one of the remaining points $p_3$ to form two lines: $\overline{p_1p_2}$ and $\overline{p_2p_3}$. Continue to choose points $p_4, \ldots p_8$ to form the $8$-cycle 
\( \overline{p_1p_2}, \overline{p_2p_3}, \ldots , \overline{p_7p_8} \). 
We have $28$ choices for the first line $\ell$ and \( 6 \times 5 \times \ldots \times 2 \times 1 =6! \) for the collection of points $p_3, \ldots , p_8$. But our starting line could have been any of the $8$ lines of the $8$-cycle, so the total number of $8$-cycles is \( \frac{28 \cdot 6!}{8}= 2,520 \) .

Here is an alternative way to count There are $8!$ ways to order linearly the $8$ points in our space. Order the $8$ points, start with the first point $P$ and march along, forming lines, then glue the last point to $P$, the ``gluing point", forming a cycle. Allowing for the $8$ possible gluing  points on our cycle and two possible orientations we obtain \( \frac{ 8!} {2 \cdot 8}=2520 \),  the result declared. 
\end{proof}

\begin{lemma} The number of proper complexes containing a $6$-cycle is
\[
\binom{8}{6} \frac{ 6! } {2 \cdot 6} 
\left[ (1+2) \binom{6}{1}+2 \binom{6}{2} \right]
= 80,640
.
\]
These complexes are counted without multiplicity.
\end{lemma}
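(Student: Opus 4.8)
The plan is to count proper complexes containing a $6$-cycle by first fixing the $6$-cycle and then enumerating the admissible ways to add the remaining two lines, being careful to subtract configurations that violate the ``proper'' condition (an omitted point or an isolated tree) and to arrange the bookkeeping so that no complex is counted with multiplicity. First I would choose the $6$ points carrying the cycle: $\binom{8}{6}$ ways. Among these $6$ points the number of distinct $6$-cycles is $\frac{6!}{2\cdot 6}$, by the same linear-ordering-with-gluing argument used in the $8$-cycle lemma (order the $6$ points, march along forming edges, glue the last to the first, then divide by the $6$ choices of gluing point and the $2$ orientations). This explains the prefactor $\binom{8}{6}\frac{6!}{2\cdot 6}$.

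Next I would analyze the two leftover points, call them $q_1,q_2$, and the two remaining lines of the complex, which must be chosen so the complex omits no point and has no isolated tree. Since the $6$-cycle already covers its $6$ points, the only points at risk of being omitted are $q_1$ and $q_2$, so each of the two added lines — or the two together — must reach both $q_1$ and $q_2$, and moreover the added edges must not form an isolated component. I expect the enumeration to split according to how many of the two new lines are incident to $\{q_1,q_2\}$: (i) the single line $\overline{q_1q_2}$ is used, in which case it would be isolated unless the other new line attaches it to the $6$-cycle — but $\overline{q_1q_2}$ meets no line of the cycle and no chord of the cycle's point set touches $q_1$ or $q_2$, so one further line from $q_1$ or from $q_2$ to a cycle point is needed, and there are two such sub-cases depending on which of $q_1,q_2$ is the attachment point (this is the ``$(1+2)$'' with the binomial $\binom{6}{1}$ recording the choice of cycle vertex to attach to, the ``$1$'' being the edge $\overline{q_1 q_2}$ counted once and the ``$2$'' the two endpoints); and (ii) the edge $\overline{q_1q_2}$ is not used, so each of $q_1,q_2$ must be joined to the cycle, giving $\binom{6}{2}$ or ordered variants thereof, with the factor $2$ accounting for the two points $q_1,q_2$ being attached independently — this is the ``$2\binom{6}{2}$'' term. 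Reconciling exactly which ordered/unordered counts give the coefficients $(1+2)\binom{6}{1}+2\binom{6}{2}$, and checking that in case (ii) the two attachment edges may or may not land on the same cycle vertex without creating a double-count, is where I would be most careful.

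The main obstacle will be the multiplicity check: a priori a proper complex might contain two different $6$-cycles, or a $6$-cycle together with structure that lets it be recounted under a different choice of the six cycle-points, and one must verify that the bracketed enumeration has been set up so that each proper complex containing a $6$-cycle is produced exactly once. The key observations making this work are that the two added lines carry too little structure to complete a second $6$-cycle on a different point set (any second $6$-cycle would need at least five of its edges among the original cycle's chords plus the two new lines, and the degree constraints at $q_1,q_2$ forbid this), and that within a fixed point set the $6$-cycle through six of them, if it exists in the complex, is unique because the remaining two lines cannot supply the $\geq 4$ additional edges a second Hamiltonian cycle on those six points would require. Once uniqueness of the $6$-cycle is established, the product $\binom{8}{6}\frac{6!}{2\cdot 6}\left[(1+2)\binom{6}{1}+2\binom{6}{2}\right]$ is multiplicity-free, and evaluating $\frac{6!}{12}=60$, $\binom{8}{6}=28$, and $3\cdot 6+2\cdot 15=48$ gives $28\cdot 60\cdot 48 = 80{,}640$, as claimed.
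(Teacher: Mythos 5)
Your proposal is correct in substance and follows the same strategy as the paper: fix the $6$-cycle via $\binom{8}{6}\cdot\frac{6!}{2\cdot 6}$ (the same gluing argument as in the $8$-cycle lemma), observe that both remaining lines must be incident to the two off-cycle points $q_1,q_2$ (a chord of the cycle would force the other line to be $\overline{q_1q_2}$, which would then be an isolated line), count the $48$ ways to add the two lines, and get multiplicity-freeness from the uniqueness of the $6$-cycle in such a complex. Where you genuinely differ is the sub-decomposition of the $48$: you split according to whether the edge $\overline{q_1q_2}$ is used, while the paper splits according to whether the two attachments meet the cycle in one vertex or in two. Both splits are valid, but your attempt to match your cases to the printed coefficients is misaligned: your case (i) (edge $\overline{q_1q_2}$ plus one attaching line) contributes $2\cdot\binom{6}{1}=12$ configurations, not $(1+2)\binom{6}{1}=18$, and your case (ii) (each of $q_1,q_2$ joined directly to the cycle, possibly at the same vertex) contributes $6\cdot 6=36$, not $2\binom{6}{2}=30$. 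In the paper's bookkeeping, $(1+2)\binom{6}{1}$ bundles ``both points attached to the same cycle vertex $A$'' ($1$ way) with the chain $A$--$q_i$--$q_j$ ($2$ ways), so its $18$ equals your $12$ plus the $6$ same-vertex configurations you left unplaced in case (ii), while $2\binom{6}{2}=30$ is exactly your case (ii) restricted to distinct attachment vertices. Since $12+36=18+30=48$, your route produces the same bracket value and the same total $28\cdot 60\cdot 48=80{,}640$; to finish cleanly you need only count your own two cases directly rather than force them onto the coefficients as printed.
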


\begin{tabular}{|c|c|}
\hline &  \\ \qquad
\begin{tikzpicture}
%\useasboundingbox (-1,-1) rectangle (6,6);
 \Vertex[x=0,style={shading=ball}, NoLabel]{A} 
 \Vertex[x=2,style={shading=ball}, NoLabel]{B}
 \Vertex[x=0.3, y=2,style={shading=ball}, NoLabel]{C}
  \Vertex[x=2.3, y=2,style={shading=ball}, NoLabel]{D}
 \Vertex[x=3.2, y=3.0,style={shading=ball}, NoLabel]{E}
 \Vertex[x=4,style={shading=ball}, NoLabel]{F}
 \Vertex[x=4.3, y=2,style={shading=ball}, NoLabel]{G}
  \Vertex[x=5.8, y=2,style={shading=ball}, NoLabel]{H}
  \Edge(B)(D)    
  \Edge(C)(D)  \Edge(D)(E)  \Edge(E)(G)      \Edge(G)(H) \Edge(H)(F)
  \Edge(F)(B) \Edge(B)(A) 
  \end{tikzpicture} \qquad
 & \qquad
\begin{tikzpicture}
%\useasboundingbox (-1,-1) rectangle (6,6);
 \Vertex[x=0,style={shading=ball}, NoLabel]{A} 
 \Vertex[x=2,style={shading=ball}, NoLabel]{B}
 \Vertex[x=0.3, y=2,style={shading=ball}, NoLabel]{C}
  \Vertex[x=2.3, y=2,style={shading=ball}, NoLabel]{D}
 \Vertex[x=3.2, y=3.0,style={shading=ball}, NoLabel]{E}
 \Vertex[x=4,style={shading=ball}, NoLabel]{F}
 \Vertex[x=4.3, y=2,style={shading=ball}, NoLabel]{G}
  \Vertex[x=5.8, y=2,style={shading=ball}, NoLabel]{H}
  \Edge(B)(D)    
  \Edge(C)(D)  \Edge(D)(E)  \Edge(E)(G)      \Edge(G)(H) \Edge(H)(F)
  \Edge(F)(B)  \Edge(D)(A)
  %\Edge(B)(A) 
  \end{tikzpicture}  \qquad
  \\ \hline 
  
 & 
 \begin{tikzpicture}
%\useasboundingbox (-1,-1) rectangle (6,6);
 \Vertex[x=0,style={shading=ball}, NoLabel]{A} 
 \Vertex[x=2,style={shading=ball}, NoLabel]{B}
 \Vertex[x=0.3, y=2,style={shading=ball}, NoLabel]{C}
  \Vertex[x=2.3, y=2,style={shading=ball}, NoLabel]{D}
 \Vertex[x=3.2, y=3.0,style={shading=ball}, NoLabel]{E}
 \Vertex[x=4,style={shading=ball}, NoLabel]{F}
 \Vertex[x=4.3, y=2,style={shading=ball}, NoLabel]{G}
  \Vertex[x=5.8, y=2,style={shading=ball}, NoLabel]{H}
  \Edge(B)(D)    
  \Edge(C)(D)  \Edge(D)(E)  \Edge(E)(G)      \Edge(G)(H) \Edge(H)(F)
  \Edge(F)(B)  \Edge(C)(A)
  %\Edge(B)(A) 
  \end{tikzpicture}  \qquad
 \\ \hline
\end{tabular}

\begin{proof}
We choose $6$ of $8$ points to form the $6$-cycle. (It is easy to see that an admissible complex cannot have more than one $6$-cycle whilst omitting no point, for, starting with a given $6$-cycle of lines, the remaining two lines of the complex must ``reach out" to the remaining two points, and therefore cannot closeup and cannot form additional cycles.) As in the count of $8$-cycles above, there are $\frac{6!}{6 \cdot 2}$ ways to form a $6$-cycle given its $6$ vertices.

The remaining two points must be attached to the $6$-cycle. Since only $2$ lines remain to be chosen, each of the attachments of the $2$ aforementioned points to the $6$-cycle must be via a unique line. The attachments can occur through a single point on the $6$-cycle, or through two different points. 

In the single attachment point case, we choose the attachment point, $A$. Either both remaining points are attached to $A$, or else one provides a ``life-line" for the other. There are are $1+2=3$ ways for the two remaining points to be attached to $A$. 

In the $2$-point attachment case, we choose $2$ of the $6$ cycle points for attachment. The two remaining points may be attached to the attachment points in $2$ ways.
\end{proof}

\subsection{Proper Complexes containing a $4$-cycle} {\quad}

\smallskip \,

We continue now with the case of a $4$-cycle. This is the most involved and, some would say, the most perplexing and frustrating part of the analysis. By analogy with Grassmann manifolds, this situation is like the ``middle Grassmannian" $Gr(k,2k)$, which enjoys the  richest structure in its family.

\pagebreak

\begin{lemma}
If a proper line complex has more than one $4$-cycle, then the cycles must be disjoint.
\end{lemma}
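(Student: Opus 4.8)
The plan is to argue by contradiction: assume the proper complex $\cal C$ contains two $4$-cycles $C_1$ and $C_2$ that are \emph{not} disjoint, and extract a violation of properness (either an omitted point or an isolated tree). The main device is simple bookkeeping. Write $k=|V(C_1)\cap V(C_2)|$ for the number of shared vertices (so the hypothesis is $k\ge 1$, and $1\le k\le 4$ since each cycle has $4$ vertices) and $e=|E(C_1)\cap E(C_2)|$ for the number of shared lines. Then $C_1\cup C_2$ covers $8-k$ of the $8$ points of $\cal C$ and uses $8-e$ of its $8$ lines, so $\cal C$ has exactly $e$ lines outside $C_1\cup C_2$, and these $e$ lines must (i) reach the $k$ points left uncovered by $C_1\cup C_2$, because $\cal C$ omits no point, and (ii) not form an isolated component, because $\cal C$ has no isolated tree.

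First I would pin down the possible pairs $(k,e)$. A shared line has both endpoints shared, so the $e$ shared lines live on the $k$ shared vertices; hence $e=0$ when $k=1$, $e\in\{0,1\}$ when $k=2$, and $e=2$ when $k=4$ (two distinct $4$-cycles on the same four points omit two distinct perfect matchings, so they share exactly two lines and their union is $K_4$). When $k=3$ the restriction of a $4$-cycle to any three of its vertices is a path of length $2$, and two such paths on the same three vertices share $1$ or $2$ edges, so $e\in\{1,2\}$. In every surviving case one has $e\le k-1$, i.e. there are at most $k-1$ leftover lines to cover $k$ leftover points.

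Then I would dispatch the cases. If $e=0$ (so $k\le 2$) there are no leftover lines at all, yet $k\ge 1$ points remain uncovered, so $\cal C$ omits a point — contradiction. Otherwise $1\le e\le k-1$ with $k\in\{2,3,4\}$: since the leftover lines number fewer than the leftover points, covering every leftover point forces at least one leftover line to join two leftover points (in the $k=4$, $e=2$ case, a perfect matching on four new points; in the $k=3$, $e=2$ case, two edges among three new points, etc.). Any such line — or the short path/matching these lines form — touches none of $C_1\cup C_2$ and is joined to nothing else in $\cal C$, hence is an isolated tree, contradicting properness; and if instead one chooses not to join two leftover points, some leftover point is missed, again a contradiction. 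Since $k\ge 1$ is impossible in all cases, $C_1$ and $C_2$ must be disjoint.

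I expect the only real friction to be the combinatorial check in the middle cases, in particular $k=3$ with $e=2$ (two leftover lines, three leftover points) and $k=2$ with $e=1$ (one leftover line, two leftover points): one must verify that every placement of the leftover edges, whether among the new points only or involving already-covered vertices, either leaves a point uncovered or severs off an isolated line or $2$-edge path. This is a short but slightly fiddly enumeration; the cases $k=1$, $k=2$ with $e=0$, and $k=4$ fall out immediately from the vertex/line count.
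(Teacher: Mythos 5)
Your proposal is correct and follows essentially the same route as the paper: classify the possible overlaps of the two $4$-cycles by shared vertices and shared lines, then use the fact that only $8$ lines and $8$ points are available to force either an omitted point or an isolated line/tree in every intersecting configuration. Your $(k,e)$ bookkeeping with the uniform observation $e\le k-1$ is just a slightly more systematic packaging of the paper's case-by-case argument ($e=0$; $k=2,e=1$; $k=3$; $k=4$), and the fiddly sub-cases you flag are exactly the ones the paper also disposes of by exhibiting the isolated line or the isolated $3$-point tree.
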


\begin{figure}[H] \begin{tikzpicture}
%\useasboundingbox (-1,-1) rectangle (3,3);
 \Vertex[x=0,style={shading=ball}, NoLabel]{A} 
 \Vertex[x=2,style={shading=ball}, NoLabel]{B}
 \Vertex[x=0.3, y=2,style={shading=ball}, NoLabel]{C}
  \Vertex[x=2.3, y=2,style={shading=ball}, NoLabel]{D}
 \Vertex[x=3.2, y=3.0,style={shading=ball}, NoLabel]{E}
 \Vertex[x=4,style={shading=ball}, NoLabel]{F}
 \Vertex[x=4.3, y=2,style={shading=ball}, NoLabel]{G}
  \Vertex[x=5.8, y=2,style={shading=ball}, NoLabel]{H}
  \Edge(A)(B)   \Edge(A)(C) 
  \Edge(C)(D)  
  \Edge(B)(D)
   \Edge(E)(G) \Edge(E)(F)
  \Edge (F)(H) 
 \Edge(G)(H)
  \end{tikzpicture}  \end{figure}

\begin{proof} Let $F$ be a $4$-cycle in the complex.
Can there be a $4$-cycle $G \ne F$ in the complex which intersects $F$? Suppose so. Then $F\cup G$ is not everything and $F,G$ must have a line in common, else the complex has no additional lines and has points not in $F \cup G$, hence omitted points. Thus $F,G$ must have at least one line in common.

Suppose $F,G$ have exactly one line in common and no other points in common. Then $F \cup G$ contains $7$ lines but involves only $6$ points, so the remaining line cannot include both remaining points and, at the same time, be connected to the rest of the complex. So $F$ and $G$ must have  $3$ or $4$ points in common. 

If $F$ and $G$ have $3$ points in common, they can have at most $2$ lines in common. (If they had $3$ lines in common and only $3$ points in common, they would have to have a triangle in common, and hence could not be $4$-cycles.) Then $F$ and $G$ involve $5$ points and have at least $6$ lines amongst them. Only two lines remain to be chosen to cover the remaining $3$ points at large, and this can only be done if these two lines form a figure $L$, or if one line ``hangs" off one or more of the $4$-cycles, and the other line is isolated; in either case we have an isolated tree, hence forbidden.

If $F$ and $G$ have $4$ points in common, both cycles are subgraphs of the complete graph on these four points. As $F,G$ are distinct, they must have two lines in common: if they had three lines  in common they'd  have to share a fourth line  also, to complete the $4$-cycle. Thus $F,G$ have $6$ lines together. The remaining two lines of the complex must go through  the four points of the complement of $F \cup G$, and they can only do so  by being isolated lines.
% As $F \ne G$, there are at least $5$ lines amongst them and at most $3$ lines in the complex are not in $F$ or $G$. The remaining $4$ points cannot be covered by $3$ or fewer lines without forming an isolated tree. For at least one of the remaining three lines, $\mathcal{M}$, must connect two of the points in the complement of $F \cup G$. The two remaining lines must pass through the two remaining points of the complement of $F \cup G$, and also ``connect" $\mathcal{M}$ to $F \cup G$, and that's not possible.
\end{proof}

\begin{figure}[H] \begin{tikzpicture}
%\useasboundingbox (-1,-1) rectangle (3,3);
 \Vertex[x=0,style={shading=ball}, NoLabel]{A} 
 \Vertex[x=2,style={shading=ball}, NoLabel]{B}
 \Vertex[x=0.3, y=2,style={shading=ball}, NoLabel]{C}
  \Vertex[x=2.3, y=2,style={shading=ball}, NoLabel]{D}
 \Vertex[x=3.2, y=3.0,style={shading=ball}, NoLabel]{E}
 \Vertex[x=4,style={shading=ball}, NoLabel]{F}
 \Vertex[x=4.3, y=2,style={shading=ball}, NoLabel]{G}
  \Vertex[x=5.8, y=2,style={shading=ball}, NoLabel]{H}
  \Edge(A)(B)   \Edge(A)(C)  \Edge(A)(D)
  \Edge(C)(D)  
  \Edge(B)(D)
   \Edge(E)(G)
\Edge(B)(C)
  \Edge (F)(H) 
% \Edge(G)(H)
  \end{tikzpicture}  \end{figure}

\begin{lemma}
The number of proper line complexes containing more than one $4$-cycle is
$$
\binom{8}{4} \left(\frac{4!}{4 \cdot 2} \right) \left(\frac{4!}{4 \cdot 2} \right)
\left( \frac{1}{2} \right) = 315.
$$
\end{lemma}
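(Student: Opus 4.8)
The previous lemma tells us that two distinct $4$-cycles in a proper complex must be vertex-disjoint; since each $4$-cycle uses $4$ of the $8$ points, two disjoint $4$-cycles partition the point set exactly, and there is no room for any further lines. Hence a proper complex with more than one $4$-cycle consists of precisely two vertex-disjoint $4$-cycles and nothing else, and in particular it automatically omits no points (every point lies on a cycle) and has no isolated trees (each component is a $4$-cycle). So the count reduces to: count the number of unordered pairs $\{F,G\}$ of vertex-disjoint $4$-cycles on $8$ labelled points.

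\textbf{Key steps.} First I would choose the $4$-element vertex set of one cycle: there are $\binom{8}{4}$ ways to split the $8$ points into an ordered pair of $4$-sets (first block gets one cycle, second block the other). Next, count the number of $4$-cycles on a fixed $4$-point set: as in the earlier $6$-cycle and $8$-cycle counts, a Hamiltonian cycle on $4$ labelled vertices can be built by ordering the $4$ vertices ($4!$ ways) and then identifying cyclic rotations ($4$ of them) and the two orientations, giving $\frac{4!}{4\cdot 2}=3$ distinct $4$-cycles; this is the factor $\left(\frac{4!}{4\cdot 2}\right)$ applied once for $F$ and once for $G$. Finally, since the two cycles play symmetric roles and we chose the two $4$-sets as an ordered pair, divide by $2$ to pass to unordered pairs of cycles; this is the $\left(\frac{1}{2}\right)$ factor. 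Multiplying, $\binom{8}{4}\cdot 3\cdot 3\cdot \frac12 = 70\cdot 9/2 = 315$, as claimed.

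\textbf{Main obstacle.} The only genuine content is confirming that there is no overcounting or undercounting beyond the obvious ordered-pair symmetry — specifically, that no proper complex is being counted more than once by this procedure and that every such complex is caught. For the first point, a given two-disjoint-$4$-cycle complex determines its two components as an \emph{unordered} pair of $4$-cycles, and each such complex arises from exactly the two ordered choices (which $4$-set is ``first''), so dividing by $2$ is exactly right; there is no subtlety about a complex having its two cycles ``equal'' since disjoint cycles are never equal. For the second point, I would invoke the preceding lemma: any proper complex with $\ge 2$ distinct $4$-cycles has those cycles disjoint, and disjointness on an $8$-point set with $4$ points per cycle forces the complex to be exactly their union with no further lines, so it is of the enumerated form. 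I would also note in passing that a single such complex contains exactly two $4$-cycles (not more), so ``more than one'' and ``exactly two'' coincide here, making the count multiplicity-free as the surrounding convention demands. Assembling these remarks into a short paragraph completes the proof.
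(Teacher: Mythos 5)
Your proposal is correct and follows essentially the same route as the paper: choose the vertex set of one cycle ($\binom{8}{4}$), form a $4$-cycle on it and on the complementary four points ($\frac{4!}{4\cdot 2}=3$ each), and divide by $2$ for the transposition of the two cycles. Your additional remarks --- that the preceding disjointness lemma forces such a complex to be exactly the union of two vertex-disjoint $4$-cycles, which is automatically proper and contains exactly two $4$-cycles --- simply make explicit what the paper leaves tacit.
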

\begin{proof}
Choose $4$ points. Arguing as in the case of an $8$-cycle, there are 
$\frac{4!}{4 \cdot 2}$ ways to form a cycle with these points. Then we form a $4$-cycle with the remaining $4$ points, and then account for the transposition of the two $4$-cycles.
\end{proof}

\noindent
\begin{remark}
In the lemmas below we'll fix a $4$-cycle, say $T$ and speak of a ``unique $4$-cycle". In making the final count we will need to multiply the lemma counts by the number of possible $4$-cycles in the complex.  The number of ways to choose $4$ points among $8$ is \( \binom {8}{4}=70 \). Given $4$ points, there are $3$ ways to make an unoriented $4$-cycle from them. Thus the counts in the lemmas below will be multiplied by \( 70 \cdot 3=210 \) in the final count.
\end{remark}

\begin{lemma} The number of topologically disconnected proper line complexes containing a unique $4$-cycle is:
\begin{equation}\label{5880}
\binom{8}{4} \left(\frac{4!}{4 \cdot 2} \right) 
\binom{4}{1} \binom{7}{1}
=5,880.
\end{equation}
\end{lemma}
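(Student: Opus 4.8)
The plan is to analyse such a complex through its connected components. Since the complex is proper it has no isolated trees, so every connected component carries at least as many edges as it has vertices; in particular every component contains a cycle and hence has at least $3$ vertices. As the complex has $8$ points and is disconnected, it then has exactly two components, of sizes $(3,5)$ or $(4,4)$. Because the two components carry $8$ edges in total while each carries at least as many edges as it has vertices, the edge counts are forced to be $(3,5)$ and $(4,4)$ respectively.

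Next I would locate the distinguished $4$-cycle $T$, which lies in one component and thereby pins that component down to at least $4$ vertices. In the $(4,4)$ case the component containing $T$ has $4$ vertices and $4$ edges and contains the $4$-cycle $T$, hence equals $T$; the other component is connected on $4$ vertices with $4$ edges, hence unicyclic, and its unique cycle cannot be a $4$-cycle (that would be a second $4$-cycle), so it is a triangle with one pendant edge --- a ``paw''. In the $(3,5)$ case the small component is a triangle and the large one is connected on $5$ vertices with $5$ edges, hence unicyclic with its unique cycle equal to $T$, so it is $T$ with a single pendant edge to the fifth vertex. In both cases the resulting complex is readily checked to be proper, disconnected, and to have $T$ as its only $4$-cycle, so these two families are exactly the complexes the lemma counts.

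It remains to count. Choosing the $4$ vertices of $T$ and a cyclic order on them contributes $\binom{8}{4}\cdot\frac{4!}{4\cdot 2}$, and because the $4$-cycle of the complex is unique, each admissible complex arises from exactly one choice of $T$, so there is no overcounting. For fixed $T$, the paw case contributes $12$ configurations on the four outside points (choose the degree-one vertex, then its neighbour among the other three; the triangle is then forced) and the pendant case contributes $16$ (choose the outside vertex joined to $T$, then its attachment vertex on $T$; the triangle on the other three is forced). These $12+16=28$ configurations are captured uniformly by singling out one outside point ($\binom{4}{1}$ ways) and joining it by a single edge either to one of the three remaining outside points or to one of the four vertices of $T$ ($3+4=\binom{7}{1}$ ways), the remaining three outside points being forced into a triangle. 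Hence the total is $\binom{8}{4}\left(\frac{4!}{4\cdot 2}\right)\binom{4}{1}\binom{7}{1}=5{,}880$.

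The step I expect to be the main obstacle --- or at least the one needing the most care --- is the structural reduction: making sure the two-component structure, the component sizes, and the edge counts are genuinely forced, so that configurations such as $K_4$ or the ``diamond'' on the four vertices of $T$ do not sneak in. These are in fact excluded automatically, because they would leave the complementary component with the wrong number of edges and thereby force an isolated tree or an omitted point; but arranging the argument so that this falls out of the global edge count, rather than from a lengthy enumeration of sub-cases, is the delicate part.
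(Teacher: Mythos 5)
Your proposal is correct, and its final count is exactly the paper's: choose the $4$-cycle $T$ in $\binom{8}{4}\cdot\frac{4!}{4\cdot 2}=210$ ways, then observe that the rest of the complex is a triangle on three of the off-cycle points together with a single edge joining the fourth off-cycle point to one of the other seven points, giving the factor $\binom{4}{1}\binom{7}{1}=28$. The only difference is in how the structure is justified: the paper argues tersely that the four off-cycle points ``cannot form a tree, nor a $4$-cycle,'' hence contain a triangle, whereas you derive the same conclusion from a global component analysis (each component of a proper complex has at least as many edges as vertices, forcing two components of sizes $(3,5)$ or $(4,4)$ with edge counts $(3,5)$ and $(4,4)$), which then splits the enumeration into the paw case ($12$) and the pendant case ($16$). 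Your route is a more systematic version of the same count; it buys a cleaner exclusion of stray configurations such as a chord of $T$ or a $K_4$ on its vertices, which the paper leaves implicit, and it makes the uniqueness-of-$T$/no-overcounting point explicit.
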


\begin{figure}[H]
\begin{tikzpicture}
%\useasboundingbox (-1,-1) rectangle (3,3);
 \Vertex[x=0,style={shading=ball}, NoLabel]{A} 
 \Vertex[x=2,style={shading=ball}, NoLabel]{B}
 \Vertex[x=0.3, y=2,style={shading=ball}, NoLabel]{C}
  \Vertex[x=2.3, y=2,style={shading=ball}, NoLabel]{D}
 \Vertex[x=3.2, y=3.0,style={shading=ball}, NoLabel]{E}
 \Vertex[x=4,style={shading=ball}, NoLabel]{F}
 \Vertex[x=4.3, y=2,style={shading=ball}, NoLabel]{G}
  \Vertex[x=5.8, y=2,style={shading=ball}, NoLabel]{H}
  \Edge(A)(B)   \Edge(A)(C) \Edge(C)(D)   \Edge(B)(D) 
  \Edge(E)(G)
  \Edge (F)(H) \Edge(F)(G) \Edge(G)(H)
  \end{tikzpicture}  
\end{figure}

\begin{proof}
The $4$ points not included in the $4$-cycle cannot form a tree, nor a $4$-cycle. Therefore they must contain a $3$-cycle. We choose $1$ point from $4$ as the point not in the $3$-cycle and connect this point to one of the other seven points by a line. This determines the line complex entirely, since the other $3$ points must be connected by their complete graph. 

Reading  \eqref{5880} from left to right, the first two factors count the number of way to select  a $4$-cycle, the next factor counts selection of a vertex not in a $3$-cycle and the last factor selects a line to connect this vertex to the rest of the complex.
\end{proof}

We now examine topologically connected complexes containing a unique $4$-cycle. The $4$-cycle must have at least one vertex of valence greater than $2$, or else the complex would not be connected.

\begin{lemma}
The number of connected proper line complexes with a unique $4$-cycle with precisely one of its vertices of valence greater than $2$ is

\begin{equation}\label{500}
\binom{4}{1} 
\left[
\binom{4}{4}+ \binom{4}{3} \cdot 3+\binom{4}{2} \left( \binom{5}{2} -2 \right) +\binom{4}{1} \left( \binom{6}{3} -4 \right)
\right]
= 500.
\end{equation}

\end{lemma}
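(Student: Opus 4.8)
The plan is to fix the $4$-cycle $T$ on a specified $4$-point set, designate the unique vertex $v$ of $T$ whose valence exceeds $2$, and count the ways to attach the remaining structure. By the earlier lemma on multiplicities of $4$-cycles, such a complex is counted once, so after we establish the bracketed count we will multiply by $\binom{4}{1}$, the number of choices of which of the four cycle-vertices is the high-valence one; this explains the leftmost factor in \eqref{500}. (The global factor $210$ of the preceding remark is applied only in the final tally, not here.) Since $T$ uses $4$ lines and the complex has $8$, exactly $4$ further lines must be chosen; these $4$ lines must cover the $4$ points outside $T$, must attach to $T$ only at $v$ (no other cycle-vertex may gain valence), must not create a second $4$-cycle or any even cycle, and must not leave an isolated tree or an omitted point. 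So the combinatorial heart is: count the subgraphs $H$ on the vertex set $\{v\}\cup(\text{the }4\text{ outside points})$, a $5$-point set, that have exactly $4$ edges, touch all four outside points, and whose union with $T$ is proper with $T$ its only cycle.

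The key step is to stratify by $k$, the number of the four outside points that are adjacent to $v$ within $H$; then $H$ consists of $k$ edges from $v$ plus $4-k$ edges among the four outside points, and these latter $4-k$ edges must span a forest on those $4$ points that, together with the $k$ pendant attachments to $v$, connects everything without forming a cycle. I would handle the four cases $k=4,3,2,1$ in turn (the case $k=0$ is impossible since then $v$ would have valence $2$ in the whole complex). For $k=4$: all four edges go from $v$, leaving $0$ further edges, giving $\binom{4}{4}=1$ way, and one checks this is connected, acyclic off $T$, and omits nothing. For $k=3$: choose the $3$ outside points joined to $v$ in $\binom{4}{3}$ ways, then one remaining edge must attach the fourth outside point to the spanned tree; it can go to any of the $3$ already-attached outside points (not to $v$, since that point is already ``spoken for'' by the $k$-count, and not creating a triangle), giving the factor $3$, hence $\binom{4}{3}\cdot 3$. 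For $k=2$: choose the $2$ points joined to $v$ in $\binom{4}{2}$ ways; the remaining $2$ edges sit among the $4$ outside points and must connect the other two outside points into the structure without forming a cycle — of the $\binom{5}{2}=10$ ways to pick $2$ edges among those $4$ points I would subtract the $2$ configurations that fail (forming a triangle, or leaving a component disconnected from $v$), yielding $\binom{4}{2}\bigl(\binom{5}{2}-2\bigr)$. For $k=1$: choose the single point joined to $v$ in $\binom{4}{1}$ ways; the remaining $3$ edges among the $4$ outside points must form a spanning tree of those $4$ points (so that the whole thing is connected through the one link to $v$) — of the $\binom{6}{3}=20$ triples of edges I would subtract the $4$ that contain a triangle (equivalently, are not trees), giving $\binom{4}{1}\bigl(\binom{6}{3}-4\bigr)$. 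Summing the four cases gives the bracket, and multiplying by $\binom{4}{1}$ gives $500$.

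The main obstacle, and the place where care is required, is the bookkeeping inside cases $k=2$ and $k=1$: one must verify that the subtracted configurations are exactly the ones violating properness or uniqueness of the cycle, and that no configuration is silently creating a second $4$-cycle (or a $6$-cycle using cycle-edges of $T$ together with the new edges) or secretly raising the valence of a second vertex of $T$. In particular, for $k=1$ I must confirm that ``$3$ edges on $4$ points, not a tree'' coincides precisely with ``contains the triangle'' — true, since on $4$ vertices a $3$-edge graph is either a spanning tree or a triangle-plus-isolated-vertex, the latter being exactly the $\binom{4}{3}=4$ triangles — and that the triangle case genuinely fails (it omits the isolated outside point). Likewise for $k=2$ I must check that among the $10$ edge-pairs on $4$ points, exactly $2$ fail once we remember that the other two outside points must each become connected to $v$: a pair fails iff it does not connect both of the not-yet-attached points to the $v$-attached part, and one enumerates these directly. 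Once these exclusions are pinned down, the arithmetic $1+12+\binom{4}{2}\cdot 8+\binom{4}{1}\cdot 16=1+12+48+64=125$, times $\binom{4}{1}=4$, gives $500$, and the remaining verifications (no even cycle, no isolated tree, no omitted point) are immediate from connectivity plus the edge count $4+4=8$.
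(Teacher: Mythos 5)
Your proposal is correct and takes essentially the same approach as the paper: fix the $4$-cycle, choose the distinguished vertex (the factor $\binom{4}{1}$), and stratify by how many of the four outside points attach to it (equivalently its valence $6,5,4,3$), with the same exclusions giving $1+12+48+64=125$ and hence $500$. One minor slip in your parentheticals: in the $k=1$ case the triangle on the three outside points \emph{not} attached to $v$ fails by disconnectedness rather than by omitting a point, and in the $k=2$ case the two excluded pairs omit a point (and create a second $4$-cycle) rather than form a triangle — but your governing criterion (all outside points connected through $v$ with no new cycle) is the right one and reproduces the paper's counts.
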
 
\begin{proof}
We parse the inner summands of \eqref{500} above from left to right.
Choose a $4$-cycle as before and mark one vertex $D$ to have valence greater than $2$. 

\begin{figure}[H]
\begin{tikzpicture}
%\useasboundingbox (-1,-1) rectangle (3,3);
 \Vertex[x=0,style={shading=ball}, NoLabel]{A} 
 \Vertex[x=2,style={shading=ball}, NoLabel]{B}
 \Vertex[x=0.3, y=2,style={shading=ball}, NoLabel]{C}
  \Vertex[x=2.3, y=2,style={shading=ball}, 
  %NoLabel
  ]{D}
 \Vertex[x=3.2, y=3.0,style={shading=ball}, NoLabel]{E}
 \Vertex[x=4,style={shading=ball}, NoLabel]{F}
 \Vertex[x=4.3, y=2,style={shading=ball}, NoLabel]{G}
  \Vertex[x=2.8, y=0.4,style={shading=ball}, NoLabel]{H}
  \Edge(A)(B)   \Edge(A)(C) \Edge(C)(D)   \Edge(B)(D)
  \Edge(D)(E)   \Edge(D)(F)
  \Edge(D)(G)
 \Edge (D)(H)
  \end{tikzpicture}  
\end{figure}

If vertex $D$ has valence $6$, there is only $1=\binom{4}{4}$ way for all $4$ of the remaining points to be connected to $D$. 

\begin{figure}[H]
\begin{tikzpicture}
%\useasboundingbox (-1,-1) rectangle (3,3);
 \Vertex[x=0,style={shading=ball}, NoLabel]{A} 
 \Vertex[x=2,style={shading=ball}, NoLabel]{B}
 \Vertex[x=0.3, y=2,style={shading=ball}, NoLabel]{C}
  \Vertex[x=2.3, y=2,style={shading=ball}, 
  %NoLabel
  ]{D}
 \Vertex[x=3.2, y=3.0,style={shading=ball}, NoLabel]{E}
 \Vertex[x=4,style={shading=ball}, NoLabel]{F}
 \Vertex[x=4.3, y=2,style={shading=ball}, NoLabel]{G}
  \Vertex[x=2.8, y=0.4,style={shading=ball}, NoLabel]{H}
  \Edge(A)(B)   \Edge(A)(C) \Edge(C)(D)   \Edge(B)(D)
  \Edge(D)(E)   \Edge(D)(F)
  \Edge(D)(G)
 \Edge (F)(H)
  \end{tikzpicture}  
\end{figure}

If $D$ has valence $5$, choose $3$ of the $4$ remaining points to be connected to $D$, and then choose one of these $3$ to connect to the last remaining point.

\begin{figure}[H]
\begin{tikzpicture}
%\useasboundingbox (-1,-1) rectangle (3,3);
 \Vertex[x=0,style={shading=ball}, NoLabel]{A} 
 \Vertex[x=2,style={shading=ball}, NoLabel]{B}
 \Vertex[x=0.3, y=2,style={shading=ball}, NoLabel]{C}
  \Vertex[x=2.3, y=2,style={shading=ball}, 
  %NoLabel
  ]{D}
 \Vertex[x=3.2, y=3.0,style={shading=ball}, 
 %NoLabel
 ]{E}
 \Vertex[x=4,style={shading=ball}, 
 %NoLabel
 ]{F}
 \Vertex[x=4.3, y=2,style={shading=ball}, 
 %NoLabel
 ]{G}
  \Vertex[x=2.8, y=0.4,style={shading=ball}, 
  %NoLabel
  ]{H}
  \Edge(A)(B)   \Edge(A)(C) \Edge(C)(D)   \Edge(B)(D)
  \Edge(D)(E)   
  %\Edge(D)(F)
  \Edge(D)(G)
\Edge (F)(H) \Edge (G)(F)
  \end{tikzpicture}  
\end{figure}

If $D$ has valence $4$, choose $2$ of the remaining $4$ points, $E,G,$ to connect to $D$. Call the remaining two points $F,H$. We must choose $2$ lines from the $6$ in the complete graph on $EFGH$. But $EG$ is forbidden (else an isolated tree or an omitted point results), so only $2$ of $5$ lines are available, and we cannot choose both to go through $F$ and omit $H$ nor both to go through $H$ and omit $F$, so we have $\binom{5}{2}-2=8$ choices. Hence there are 
$\binom{4}{2} \cdot \left( \binom{5}{2}-2\right) =48$ complexes with $D$ of valence $4$.

\pagebreak

\begin{figure}[H]
\begin{tabular}{|l|r|}
\hline &  \\
\begin{tikzpicture}
%\useasboundingbox (-1,-1) rectangle (3,3);
 \Vertex[x=0,style={shading=ball}, NoLabel]{A} 
 \Vertex[x=2,style={shading=ball}, NoLabel]{B}
 \Vertex[x=0.3, y=2,style={shading=ball}, NoLabel]{C}
  \Vertex[x=2.3, y=2,style={shading=ball}, NoLabel]{D}
 \Vertex[x=3.2, y=3.0,style={shading=ball}, NoLabel]{E}
 \Vertex[x=4,style={shading=ball}, NoLabel]{F}
 \Vertex[x=4.3, y=2,style={shading=ball}, NoLabel]{G}
  \Vertex[x=5.8, y=2,style={shading=ball}, NoLabel]{H}
  \Edge(A)(B)   \Edge(A)(C) \Edge(C)(D)   \Edge(B)(D)
  \Edge(D)(E) \Edge(F)(G)   \Edge(G)(H) \Edge (F)(H)
  \end{tikzpicture} 
&
  \begin{tikzpicture}
%\useasboundingbox (-1,-1) rectangle (3,3);
 \Vertex[x=0,style={shading=ball}, NoLabel]{A} 
 \Vertex[x=2,style={shading=ball}, NoLabel]{B}
 \Vertex[x=0.3, y=2,style={shading=ball}, NoLabel]{C}
  \Vertex[x=2.3, y=2,style={shading=ball}, NoLabel]{D}
 \Vertex[x=3.2, y=3.0,style={shading=ball}, NoLabel]{E}
 \Vertex[x=3.7, y=2,style={shading=ball}, NoLabel]{F}
 \Vertex[x=4.8, y=2,style={shading=ball}, NoLabel]{G}
  \Vertex[x=3.7, style={shading=ball}, NoLabel]{H}
  \Edge(A)(B)   \Edge(A)(C) \Edge(C)(D)   \Edge(B)(D)
  \Edge(D)(E) 
  \Edge(E)(F)   \Edge(F)(G) \Edge (E)(G)
  \end{tikzpicture}  \qquad \\
  \hline  
\qquad  \qquad Disconnected &  Omitted Point \qquad \qquad   \\
  \hline
\end{tabular}  \end{figure}

If $D$ has valence $3$, choose $1$ of the remaining $4$ points and connect it to the $4$-cycle at $D$. We need to select three more lines involving the non-$4$-cycle points, and we must avoid forming  a $3$-cycle which would be a connected component, contradicting the hypothesis, or else leave one point omitted. There are $4$ ways to generate a $3$-cycle among the $4$ remaining points, so there are $\binom{4}{1} \left( \binom{6}{3} -4 \right)=4 \cdot 16=64$ complexes here.
\end{proof}

\begin{lemma}
The number of connected proper line complexes with  a unique $4$-cycle with $2$ vertices of valence greater than $2$ is:
\[
1092.
\]
\end{lemma}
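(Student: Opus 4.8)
The plan is to fix the $4$-cycle $T=\overline{v_1v_2}\,\overline{v_2v_3}\,\overline{v_3v_4}\,\overline{v_4v_1}$ (the factor $70\cdot 3=210$ of the Remark being supplied at the end), contract $T$ to a single vertex, and reduce the count to counting labeled trees by the degree of the contracted vertex; pleasantly, the bracket terms appearing in \eqref{500} will reappear as those tree counts.

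\emph{Structural reduction.} The complexes in question are connected with $8$ vertices and $8$ edges, hence have cycle rank $1$ and a unique cycle; since $T$ is a cycle inside such a complex $G$, that cycle is $T$, and in particular neither diagonal $\overline{v_1v_3}$, $\overline{v_2v_4}$ occurs. So $G=T\cup F$ where $F$ is a set of $4$ edges, each with an endpoint among the off-cycle points $w_1,w_2,w_3,w_4$. Contracting $T$ to a vertex $*$ produces $G/T$ on $\{*,w_1,w_2,w_3,w_4\}$ with $4$ edges, and $G$ is connected iff $G/T$ is a spanning tree of these five vertices; conversely every such spanning tree lifts back. A spanning tree has no multiple edges, so with $k=\deg_{G/T}(*)$ the set $F$ consists of $k$ \emph{attachment edges}, whose off-cycle endpoints are $k$ \emph{distinct} points, together with $4-k$ edges internal to $\{w_1,\dots,w_4\}$; and $1\le k\le 4$.

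\emph{The valence condition.} Lifting a spanning tree, the internal edges of $F$ are determined while each attachment edge must be given one of $v_1,\dots,v_4$ as its cycle endpoint; distinct assignments yield distinct complexes, and each desired complex arises exactly once. The valence of $v_i$ in $G$ is $2$ plus the number of attachment edges assigned to $v_i$, so ``exactly two of the $v_i$ have valence $>2$'' says precisely that the $k$ attachment edges are assigned to exactly two distinct cycle vertices; for a fixed tree with $\deg_{G/T}(*)=k$ there are $\binom{4}{2}(2^k-2)$ such assignments (choose the two cycle vertices, distribute the $k$ distinguishable attachment edges onto them surjectively), which is $0$ unless $k\ge 2$.

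\emph{Counting the trees.} By Cayley's formula via Pr\"ufer sequences, the number of labeled trees on five vertices with a prescribed vertex of degree $k$ equals $\binom{3}{k-1}4^{4-k}$, that is $48,12,1$ for $k=2,3,4$ — exactly the terms $\binom{4}{2}(\binom{5}{2}-2)$, $\binom{4}{3}\cdot 3$, $\binom{4}{4}$ seen in \eqref{500}. Hence
\[
\sum_{k=2}^{4}\binom{3}{k-1}4^{4-k}\,\binom{4}{2}\bigl(2^k-2\bigr)
=48\cdot 6\cdot 2+12\cdot 6\cdot 6+1\cdot 6\cdot 14=576+432+84=1092 .
\]
The main obstacle is making the structural reduction airtight: one must be sure the lift can never create a second cycle — which is why the cycle-rank/edge-count remark goes first and why no double edge at $*$ is possible — and that the ``exactly two vertices of large valence'' condition converts without slippage into the surjectivity count with no residual overcounting. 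Once that is nailed, the arithmetic above finishes it; as a sanity check, the same bookkeeping applied to a single attachment vertex returns $\binom{4}{1}\cdot 125=500$, matching the previous lemma since every spanning tree has $\deg(*)\ge 1$. One could instead imitate the explicit case analysis of that lemma, splitting on how the two attachment valences and the internal edges are arranged, to obtain $\binom{4}{2}[\cdots]=1092$ directly.
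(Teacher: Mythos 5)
Your proof is correct, and it takes a genuinely different route from the paper. The paper fixes the two cycle vertices $D,B$ of valence $>2$ and runs an explicit case analysis on their valence pattern ($5+3$, $4+4$, $4+3$, $3+3$), obtaining $8+6+72+96=182$ complexes per pair and multiplying by $\binom{4}{2}$; you instead contract the $4$-cycle to a vertex $*$, observe that connectedness with $8$ vertices and $8$ edges forces unicyclicity (so the contraction is a spanning tree on $5$ vertices and no diagonal or double attachment can arise), and then count trees by $\deg(*)=k$ via Pr\"ufer, $\binom{3}{k-1}4^{4-k}$, times the $\binom{4}{2}(2^k-2)$ surjective assignments of the $k$ attachment edges to two cycle vertices. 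The two computations in fact agree term by term after regrouping: per pair your $k=2,3,4$ contributions $48\cdot2=96$, $12\cdot6=72$, $1\cdot(8+6)=14$ recover the paper's $3+3$, $4+3$, and $5+3$ plus $4+4$ counts. What the paper's argument buys is a completely elementary, picture-friendly enumeration; what yours buys is uniformity: it explains the bracket $1+12+48+64=125=5^3$ in the preceding lemma as Cayley's formula, gives the sanity check $4\cdot125=500$, and yields the neighboring lemmas by the same formula with surjections onto $3$ or $4$ vertices ($4\cdot(12\cdot6+1\cdot36)=432$ and $1\cdot24=24$), so it would scale better to larger models. Your structural reduction is airtight as stated (cycle rank $1$ excludes diagonals, repeated attachment points, and any second even cycle), so no gap remains.
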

\begin{proof}

Choose $2$ points on the $4$-cycle, $D,B$ and consider their possible valences: $5+3, $4+4$, $, $4+3$, $3+3$, with transposed orderings suppressed. (If $D$ has valence $5$, the  complex has only one additional line besides the $4$-cycle and lines through $D$. This last line must emanate from $B$; hence $5+4$ and other heavy combinations are excluded.)

\begin{figure}[H]
\begin{tikzpicture}
%\useasboundingbox (-1,-1) rectangle (3,3);
 \Vertex[x=0,style={shading=ball}, NoLabel]{A} 
 \Vertex[x=2,style={shading=ball}, 
 %NoLabel
 ]{B}
 \Vertex[x=0.3, y=2,style={shading=ball}, NoLabel]{C}
  \Vertex[x=2.3, y=2,style={shading=ball}, 
  %NoLabel
  ]{D}
 \Vertex[x=3.2, y=3.0,style={shading=ball}, NoLabel]{E}
 \Vertex[x=4,style={shading=ball}, NoLabel]{F}
 \Vertex[x=4.3, y=2,style={shading=ball}, NoLabel]{G}
  \Vertex[x=3.1, y=0.1,style={shading=ball}, NoLabel]{H}
  \Edge(A)(B)   \Edge(A)(C) \Edge(C)(D)   \Edge(B)(D)
  \Edge(D)(E)   \Edge(D)(F)
  \Edge(D)(G) \Edge(B)(H)
  \end{tikzpicture}  
\end{figure}

Suppose $D,B$ have valence $5+3$. We choose one of the $4$ points not on the $4$-cycle to be the vertex connected to $B$ that does not belong to the $4$-cycle, and all else is determined. So there are $2 \cdot 4=8$ complexes of this form, the ``$2$" accounting for transposing $D$ and $B$.

\begin{figure}[H]
\begin{tikzpicture}
%\useasboundingbox (-1,-1) rectangle (3,3);
 \Vertex[x=0,style={shading=ball}, NoLabel]{A} 
 \Vertex[x=2,style={shading=ball}, 
 %NoLabel
 ]{B}
 \Vertex[x=0.3, y=2,style={shading=ball}, NoLabel]{C}
  \Vertex[x=2.3, y=2,style={shading=ball}, 
  %NoLabel
  ]{D}
 \Vertex[x=3.2, y=3.0,style={shading=ball}, NoLabel]{E}
 \Vertex[x=4,style={shading=ball}, NoLabel]{F}
 \Vertex[x=4.3, y=2,style={shading=ball}, NoLabel]{G}
  \Vertex[x=3.1, y=0.1,style={shading=ball}, NoLabel]{H}
  \Edge(A)(B)   \Edge(A)(C) \Edge(C)(D)   \Edge(B)(D)
  \Edge(D)(E)   
  \Edge(B)(F)
  \Edge(D)(G) \Edge(B)(H)
  \end{tikzpicture}  
\end{figure}

Suppose $D,B$ have valence $4+4$. We choose two ``external" points for $D$ and then the external points for $B$  are determined. There are $\binom{4}{2}=6$ cases here. 

\begin{figure}[H]
\begin{tikzpicture}
%\useasboundingbox (-1,-1) rectangle (3,3);
 \Vertex[x=0,style={shading=ball}, NoLabel]{A} 
 \Vertex[x=2,style={shading=ball}, 
 %NoLabel
 ]{B}
 \Vertex[x=0.3, y=2,style={shading=ball}, NoLabel]{C}
  \Vertex[x=2.3, y=2,style={shading=ball}, 
  %NoLabel
  ]{D}
 \Vertex[x=3.2, y=3.0,style={shading=ball}, NoLabel]{E}
 \Vertex[x=4,style={shading=ball}, NoLabel]{F}
 \Vertex[x=4.3, y=2,style={shading=ball}, NoLabel]{G}
  \Vertex[x=3.1, y=0.1,style={shading=ball}, NoLabel]{H}
  \Edge(A)(B)   \Edge(A)(C) \Edge(C)(D)   \Edge(B)(D)
  \Edge(D)(E)   
 % \Edge(B)(F)
  \Edge(D)(G) 
  \Edge(B)(H)  \Edge(F)(H)
  \end{tikzpicture}  
\end{figure}

Suppose $D,B$ have valence $4+3$. Choose $2$ of the external points
to yield valence $4$ for $D$,  then $1$ of $2$ possible points to yield valence $3$ for $B$, then connect the remaining point to one of the $3$ just connected points, so that it will not be omitted. We need to double the count to allow for $4+3=3+4$. Hence we have $\binom{4}{2} \cdot 2 \cdot 3 \cdot 2=36 \cdot 2=72$ complexes. 

\begin{figure}[H]
\begin{tikzpicture}
%\useasboundingbox (-1,-1) rectangle (3,3);
 \Vertex[x=0,style={shading=ball}, NoLabel]{A} 
 \Vertex[x=2,style={shading=ball}, 
 %NoLabel
 ]{B}
 \Vertex[x=0.3, y=2,style={shading=ball}, NoLabel]{C}
  \Vertex[x=2.3, y=2,style={shading=ball}, 
  %NoLabel
  ]{D}
 \Vertex[x=3.2, y=3.0,style={shading=ball}, 
 %NoLabel
 ]{E}
 \Vertex[x=4,style={shading=ball}, NoLabel]{F}
 \Vertex[x=4.3, y=2,style={shading=ball}, NoLabel]{G}
  \Vertex[x=3.1, y=0.1,style={shading=ball}, 
  %NoLabel
  ]{H}
  \Edge(A)(B)   \Edge(A)(C) \Edge(C)(D)   \Edge(B)(D)
  \Edge(D)(E)   
  \Edge(F)(H) 
  \Edge(B)(H)  \Edge(F)(G)
  \end{tikzpicture}  
\end{figure}

Suppose $D,B$ have valence $3+3$. Choose $1$ of $4$ points, $E$, to make a valence $3$ point in $D$, then $1$ of $3$, $H$, to make valence $3$ for $B$. We have to choose two additional  lines involving $E,H$ and the two remaining points, and $EH$ is forbidden (lest a new $4$-cycle be introduced), so we have to choose $2$ lines from the remaining $6-1=5$. But we cannot have both lines avoiding one the two remaining external points, nor both avoiding the other, so we have $\binom{5}{2}-2=8$. Overall, we have $\binom{4}{2} \cdot \left( \binom {5}{2} -2 \right)=12 \cdot 8=96.$

Adding up, we have
$$
\binom{4}{2} \left[ 8+6+72+96 \right] =6 \cdot 
182 = 1092,
$$
as asserted.
\end{proof}

\begin{lemma}
The number of connected proper line complexes with  a unique $4$-cycle with $3$ vertices of valence greater than $2$ is:
$$
432.
$$
\end{lemma}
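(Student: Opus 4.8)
The plan is to continue the bookkeeping of the two preceding lemmas, with the $4$-cycle $T$ held fixed throughout (the Remark then multiplies the result by the number of $4$-cycles). Label the vertices of $T$ as $A,B,C,D$ in cyclic order and call the other four points $E,F,G,H$. A connected proper complex having $T$ as its unique $4$-cycle is $T$ plus $4$ more edges; as the whole graph is connected on $8$ vertices with $8$ edges it is unicyclic, so $T$ is its only cycle, every outside point is reached, no edge joins two vertices of $T$ (a chord would make a triangle and a repeat would be a multi-edge), and hence each of the $4$ extra edges is either ``cycle-to-outside'' or ``outside-to-outside''.

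First I would choose the vertex of $T$ of valence exactly $2$; there are $\binom{4}{1}=4$ choices, the remaining three being the special vertices of valence $>2$, so it suffices to obtain $108$ for one choice and multiply by $4$. Say $A$ has valence $2$ and $B,C,D$ are special. Then $A$ gets no extra edge, so all extra edges at $B,C,D$ land on outside points; since each of $B,C,D$ needs at least one, at least three of the four extra edges are of this type. So exactly one of two cases holds. \emph{Type I}: the fourth extra edge is again at one of $B,C,D$, so their extra-valences are $(2,1,1)$ in some order and one special vertex has valence $4$. \emph{Type II}: the fourth extra edge joins two outside points and each of $B,C,D$ has valence $3$; it cannot be incident to $A$ (raising $A$'s valence) nor be a chord or a cycle edge.

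For Type I: the four cycle-to-outside edges must cover all of $E,F,G,H$, and no outside point may be adjacent to two vertices of $T$, since that produces a second cycle --- a triangle when the two $T$-vertices are adjacent on $T$, a second $4$-cycle when they form the diagonal pair $\{B,D\}$ --- so the edges give a bijection onto $\{E,F,G,H\}$. Thus: choose the valence-$4$ special vertex ($3$), its two outside neighbors ($\binom{4}{2}=6$), and the matching of the other two outside points to the two valence-$3$ special vertices ($2!=2$), for $36$. For Type II the same ``no second cycle'' constraint forces $B,C,D$ to attach to three distinct outside points ($4\cdot 3\cdot 2=24$), and the fourth outside point, otherwise omitted, must be joined by the one outside-outside edge to one of those three ($3$), creating no new cycle since that point then has degree $1$; this gives $72$. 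Hence one choice of the valence-$2$ vertex yields $36+72=108$ and $4\cdot 108=432$, and no complex is double counted because its unique $4$-cycle and the vertex valences pin down the decomposition.

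The delicate point --- and the main obstacle --- is the exhaustiveness of this case split: one must be sure that Types I and II are the only ways to place the fourth extra edge, and in particular that two special vertices can never share an outside neighbor, which forces the diagonal pair $\{B,D\}$ to be handled on the same footing as the adjacent pairs $\{B,C\},\{C,D\}$. As a check, adding the three ``valence $>2$'' counts and the last case of all four $T$-vertices special, namely a perfect matching of $\{A,B,C,D\}$ with $\{E,F,G,H\}$ contributing $4!=24$, gives $500+1092+432+24=2048$, which equals the total number of connected proper complexes containing the fixed $T$ (contract $T$ to a point, so these correspond to spanning trees of the resulting $K_5$ with each tree-edge at the contracted vertex lifted to one of $4$ choices, and $\sum 4^{\deg}=4^4\cdot 2^3=2048$).
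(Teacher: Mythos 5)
Your argument is correct and takes essentially the same route as the paper: once the $4$-cycle and its valence-$2$ vertex are fixed, your Type I and Type II are exactly the paper's $3+3+4$ and $3+3+3$ valence distributions, giving $4\cdot 36=144$ and $4\cdot 72=288$, hence $432$. Your extra care in ruling out chords and shared outside neighbors, and the closing consistency check $500+1092+432+24=2048$ via spanning trees of $K_5$, are pleasant additions but do not change the method.
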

\begin{proof}
Choose the $3$ points on the $4$-cycle with valence greater than $2$. The valence distribution must of the form $3+3+4$ or $3+3+3$, as the complex has just $8$ lines.

\begin{figure}[H]
\begin{tikzpicture}
%\useasboundingbox (-1,-1) rectangle (3,3);
 \Vertex[x=0,style={shading=ball}, NoLabel]{A} 
 \Vertex[x=2,style={shading=ball}, NoLabel]{B}
 \Vertex[x=0.3, y=2,style={shading=ball}, NoLabel]{C}
  \Vertex[x=2.3, y=2,style={shading=ball}, NoLabel]{D}
 \Vertex[x=3.2, y=3.0,style={shading=ball}, NoLabel]{E}
 \Vertex[x=1.1, y=0.1, style={shading=ball}, NoLabel]{F}
 \Vertex[x=4.3, y=2,style={shading=ball}, NoLabel]{G}
  \Vertex[x=3.1, y=0.1,style={shading=ball}, NoLabel]{H}
  \Edge(A)(B)   \Edge(A)(C) \Edge(C)(D)   \Edge(B)(D)
  \Edge(D)(E)   
  \Edge(B)(H)  \Edge(D)(G) \Edge(A)(F)
  \end{tikzpicture}  
\end{figure}

In the $3+3+4$ configuration, we choose $1$ point on the $4$-cycle with valence $2$. Then we choose $1$ of the remaining $3$ points on the $4$-cycle with valence $4$; the remaining $2$ points have valence $3$. Then we choose $2$ of the $4$ off-cycle (external)  points to connect to the valence $4$ vertex. The remaining $2$ off-cycle points can be connected to the $2$ valence $3$ points in $2$ ways. This gives
$$
\binom {4}{1} \binom {3}{1} \binom {4}{2} \cdot 2 = 144 \textrm{  examples}.
$$

\begin{figure}[H]
\begin{tikzpicture}
%\useasboundingbox (-1,-1) rectangle (3,3);
 \Vertex[x=0,style={shading=ball}, NoLabel]{A} 
 \Vertex[x=2,style={shading=ball}, NoLabel]{B}
 \Vertex[x=0.3, y=2,style={shading=ball}, NoLabel]{C}
  \Vertex[x=2.3, y=2,style={shading=ball}, NoLabel]{D}
 \Vertex[x=3.2, y=3.0,style={shading=ball}, NoLabel]{E}
 \Vertex[x=1.1, y=0.1, style={shading=ball}, NoLabel]{F}
 \Vertex[x=4.3, y=2,style={shading=ball}, NoLabel]{G}
  \Vertex[x=3.1, y=0.1,style={shading=ball}, NoLabel]{H}
  \Edge(A)(B)   \Edge(A)(C) \Edge(C)(D)   \Edge(B)(D)
  \Edge(D)(E)   
  \Edge(B)(H)  
  \Edge(H)(G) 
  \Edge(A)(F)
  \end{tikzpicture}  
\end{figure}

In the $3+3+3$ configuration we choose $1$ of the $4$ points of the $4$-cycle to have valence $2$; the others will have valence $3$. Choose $1$ of the $4$ off-cycle points, which will not be connected to the $4$-cycle. The other $3$ off-cycle points can be connected to the valence $3$ cycle points in $3!$ ways. The off-cycle point not connected to the cycle must be connected to one of the remaining $3$ off-cycle points. This can be done in $3$ ways.
In all, we have have:
\[
\binom {4}{1} \binom{4}{1} (3!) 3 = 288 \textrm{ examples}.
\]
Adding up, $288+144=432$.
\end{proof}

\begin{lemma}
The number of connected proper line complexes with a unique $4$-cycle with $4$ points of valence greater than $2$ is $4!=24.$
\end{lemma}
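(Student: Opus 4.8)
The plan is to fix the $4$-cycle $T$ on its four chosen vertices and analyze the four remaining lines of the complex, showing that they are forced to be a perfect matching between the vertices of $T$ and the four off-cycle points.

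First I would note that $T$ uses $4$ of the $8$ lines, so exactly $4$ lines remain to be placed. Each vertex of $T$ already has valence $2$ from the cycle, so the hypothesis that all four have valence greater than $2$ means each must be incident to at least one of the four remaining lines. A counting argument ($4$ remaining lines, $4$ cycle vertices, each needing a remaining line) then shows each vertex of $T$ meets exactly one remaining line, and in particular \emph{no remaining line is a chord} of $T$: a chord would be incident to two vertices of $T$ at once and hence leave some vertex of $T$ without an extra line. This chord-exclusion step is the one genuinely fiddly point. To make it airtight I would argue directly: a single chord gives two of the cycle vertices valence $3$ but leaves the other two still requiring incident lines, and chasing the three further lines against the demand that all four off-cycle points be covered forces either an omitted point or an isolated line, contradicting properness; two chords complete $K_4$ on the vertices of $T$, producing extra $4$-cycles (violating uniqueness) and in any case leaving too few lines to cover the off-cycle points.

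With chords excluded, each remaining line joins a vertex of $T$ to an off-cycle point, the $T$-endpoints running over all four vertices of $T$ (one apiece). Since there are exactly four such lines and all four off-cycle points must be covered (no omitted points), the off-cycle endpoints are also all distinct. Hence the four remaining lines form a bijection between the vertices of $T$ and the four off-cycle points, and there are $4!$ such bijections. Finally I would verify that each of these $4!$ configurations genuinely qualifies: the complex is connected (the cycle links the four $T$-vertices and each off-cycle point hangs off one of them), omits no point, has no isolated tree, and since every off-cycle vertex is pendant, the only cycle present is $T$ itself — so $T$ is the unique $4$-cycle and all four of its vertices have valence $3>2$. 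Therefore the count is exactly $4!=24$.
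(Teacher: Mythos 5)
Your argument is correct and complete. Note, though, that the paper offers no real proof here: it declares the lemma self-evident, gesturing at the picture of ``an upside down table with four legs pointing to the ceiling.'' Your write-up is precisely the rigorous version of that picture: with the $4$-cycle $T$ fixed, the four remaining lines have $8$ endpoint slots, and the requirements that each of the four cycle vertices gain an extra incident line and that each of the four off-cycle points be covered force every point to receive exactly one slot; your chord-exclusion step (one chord forces an isolated line among the leftover three lines, two chords create extra $4$-cycles and strand the off-cycle points) is exactly the detail the paper skips, and it is handled correctly. One small caution: your opening counting remark (``$4$ lines, $4$ vertices, each needing one'') does not by itself yield ``exactly one apiece'' until chords are ruled out, since a chord serves two cycle vertices at once — but you acknowledge this and supply the needed case analysis, after which the pendant edges form a bijection from the vertices of $T$ to the off-cycle points, giving $4!=24$, and your verification that each such configuration is proper, connected, and has $T$ as its only cycle closes the loop. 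So your proposal supplies the missing justification rather than diverging from the paper's (nonexistent) argument.
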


We hold the truth of this last Lemma to be self-evident, though a picture make come to mind of an upside down table with four legs pointing to the ceiling. (Note that \emph{connected} is superfluous in the statement of the Lemma.)

\section{Admissible Complexes: a complete count}

The grand total is:
$$
1456875- \left[ 210 \cdot  (24+432+1092+500)+5880+315+80640+2520\right]=937440.
$$
Note that the $210$ multiplier accounts for the number of ways to choose a  $4$-cycle among $8$ points for lemmas above assuming a fixed $4$-cycle has been chosen. 
{\small
(There are \( \binom{8}{4}=70 \) ways to choose $4$ points out of $8$, and \( \frac{4!}{4 \cdot 2} =3\) ways to form an unoriented $4$-cycle out of $4$ points; oh, and \( 70 \cdot 3 = 210 \) .)}

%\begin{acknowledgement}
%The author thanks the referee for helpful comments and suggestions.
%\end{acknowledgement}
%
%

%\input{reference.tex}


\begin{thebibliography}{12345}



\bibitem{Bol}  E.D.~Bolker,  
\emph{The finite Radon transform. Integral geometry
} (Brunswick, Maine, 1984),  Contemp. Math., \textbf{63}
, Amer. Math. Soc., Providence, RI, (1987), 27-50. 



\bibitem{BGK} E.D.~Bolker,  E.L.~Grinberg and J.P.~ Kung, 
\emph{
Admissible complexes for the combinatorial Radon transform. A progress report. 
},  Integral geometry and tomography (Arcata, CA, 1989),
    Contemp. Math., \textbf{113}, Amer. Math. Soc., Providence, RI, (1990),  1-3. 

\bibitem{FG}  David Feldman and Eric L. Grinberg, \emph{Admissible Complexes for the Projective X-Ray Transform over a Finite Field}    ,  arXiv:1707.06695v2  (2017).


\bibitem{Gel1} I.M.~Gelfand and M.I.~Graev, 
\emph{Integral Transformations Connected with Straight Line Complexes in Complex Affine Space},
Dokl. Acad. Nauk SSSR \textbf{2} (1961), 809--812.

\bibitem{Gel2} I.M.~Gelfand, M.I.~Graev and N.~Vilenkin, 
\emph{Generalized Functions}, Vol. 5, Academic Press, New York, 1966.

\bibitem{Gr1} E.L.~Grinberg, 
\emph{ The Admissibiity Theorem for the Hyperplane Transform over a Finite Field},
 J. Comb. Theory, Series A \textbf{53},
(1990), 316--320.


\bibitem{Gr2}  E.L.~Grinberg
{\it The Admissibility Theorem for the Spatial X-Ray Tranform over the Two Element Field}, Ehrenpreis Memorial Volume, Sabadini \& Struppa editors (2012).

\bibitem{Kir}
 Kirillov, A. A. \emph{On A Problem of I. M. Gelfand},
 Dokl. Akad. Nauk SSSR 137 276--277 (Russian); translated as Soviet Math. Dokl. 2 1961 268-269. 

\bibitem{La}  Leslie Lamport,  \emph{How to write a 21st century proof},  J. Fixed Point Theory Appl. \textbf{11}  (2012), no. 1, 43--63. 

\bibitem{NS} Leah Crane, \emph{Google has created a maths AI that has already proved 1200 theorems}, New Scientist, Technology  (25 April 2019).


\bibitem{Radon1917} J.~Radon, Uber die Bestimmung von Funktionen durch ihre Integralwerte l angs be-stimmter Mannigfaltigkeiten, Ber. Verb. S achs. Akad. Wiss. Leipzig, Math.-Nat. Kl., {\bf{69}} (1917), 262--277

\end{thebibliography}
\end{document}